\numberwithin{equation}{section}
\newtheorem*{theorem-non}{Theorem}
\newcommand{\sumtwo}[2]{\sum_{\substack{#1 \\ #2}}}
\newcommand{\suptwo}[2]{\sup_{\substack{#1 \\ #2}}}
\newcommand{\ind}{\mathds{1}}
\newcommand{\bP}{\mathbf{P}}
\newcommand{\bE}{\mathbf{E}}
\newcommand{\bbP}{\mathbb{P}}
\newcommand{\bbE}{\mathbb{E}}
\newcommand{\bbR}{\mathbb{R}}
\newcommand{\bbZ}{\mathbb{Z}}
\newcommand{\ent}{\mathrm{Ent}}
\newcommand{\len}{\ell}
\newcommand{\cP}{{\ensuremath{\mathcal P}} }
\newcommand{\cE}{{\ensuremath{\mathcal E}} }
\newcommand{\cH}{{\ensuremath{\mathcal H}} }
\newcommand{\cC}{{\ensuremath{\mathcal C}} }
\newcommand{\cN}{{\ensuremath{\mathcal N}} }
\newcommand{\cL}{{\ensuremath{\mathcal L}} }
\newcommand{\cT}{{\ensuremath{\mathcal T}} }
\newcommand{\cB}{{\ensuremath{\mathcal B}} }
\newcommand{\dd}{\textrm{d}}
\renewcommand{\epsilon}{\varepsilon}
\renewcommand{\phi}{\varphi}
\newtheorem{theorem}{Theorem}[section]
\newtheorem{conj}[theorem]{Conjecture}
\newtheorem{proposition}[theorem]{Proposition}
\newtheorem{lemma}[theorem]{Lemma}
\theoremstyle{definition}
\newtheorem{remark}[theorem]{Remark}
\newcommand{\ga}{\alpha}
\newcommand{\gb}{\beta}
\newcommand{\gep}{\varepsilon}       % \ge already exists...
\newcommand{\gD}{\Delta}
\newcommand{\gU}{\Upsilon}
\newcommand{\sE}{\mathscr{E}}
\newcommand{\sL}{\mathscr{L}}
\title[Local and global constraints in LPP]{Beyond Hammersley's Last-Passage Percolation: a discussion on possible local and global constraints}
\author[Q. Berger]{Quentin Berger}
\address{Sorbonne Universit\'e, LPSM,
Campus Pierre et Marie Curie, case courrier 158,
4 place Jussieu, 75252 Paris Cedex 5, France}
\email{quentin.berger@sorbonne-universite.fr}
\author[N. Torri]{Niccol\`o Torri}
\address{Sorbonne Universit\'e, LPSM,
Campus Pierre et Marie Curie, case courrier 158,
4 place Jussieu, 75252 Paris Cedex 5, France}
\email{niccolo.torri@sorbonne-universite.fr}
\date{}
\begin{document}

\begin{abstract}
Hammersley's Last-Passage Percolation (LPP), also known as Ulam's problem, is a well-studied model that can be described as follows: consider $m$ points chosen uniformly and independently in $[0,1]^2$, then what is the maximal number $\mathcal{L}_m$ of points that can be collected by an up-right path? We introduce here a generalization of this standard LPP, in order to allow for more general constraints than the up-right condition (a $1$-Lipschitz condition after rotation by $45^{\circ}$).
We focus more specifically on two cases: (i) when the constraint is a $\gamma$-H\"older (local) condition, we call it H-LPP; (ii) when the constraint is a path-entropy (global) condition, we call it E-LPP. 
These generalizations also allows us to deal with \textit{non-directed} LPP.
We develop motivations for directed and non-directed constrained LPP, and we give the correct order of $\mathcal{L}_m$ in a general manner.
% -- as a specific example, the maximal number of points that can be collected  by a non-directed path of total length smaller than~$1$ is shown to be of order $\sqrt{m}$.
%This new LPP opens the way for many interesting problems, and we present some of its potential applications.
\\[0.05cm]
\textit{Keywords: Last-passage percolation, non-directed polymers.}\\[0.05cm]
\textit{2010 Mathematics Subject Classification: 60K35, 82B44.}
\end{abstract}

\thanks{N. Torri was supported by a public grant as part of the “Investissements d’Avenir” program (ANR-11-LABX-0020-01 and  ANR-10-LABX-0098). 
Q. Berger acknowledges the support of grant ANR-17-CE40-0032-02. Both authors also acknowledge the support of PEPS grant from CNRS, which led to the development of this work.
The authors are also most grateful to N. Zygouras for several discussions on the subject, and in particular for discussing the application presented in Section~\ref{sec:appli1}.
}

\maketitle

%\tableofcontents

\section{Introduction}
\label{sec:intro}

In this introduction, we recall the original Hammersley LPP problem, and we show how to generalize this process by enlarging the set of paths allowed to collect points, by changing the \emph{increasing} constraint, to a more general \emph{compatibility} condition.  
We point out that the condition in the Hammersley's LPP is \emph{local}, the constraint depending only on two consecutive points. Conversely, a \emph{global} condition is a constraint that takes in account the whole path trajectory that collects points. 

In Section \ref{sec:LPP}, we introduce some specific constraints of interest (local and global) in the directed setting and we derive the correct order for the corresponding LPP problems. In Section~\ref{sec:nondir}, we consider a  non-directed LPP and we also derive its correct order.
In Section~\ref{sec:applications}, we present some contexts where our generalized  LPP can be useful---it has already proven useful in \cite{cf:BT_HT}.
%We do not pursue for optimal constants or for more precise convergence results, since it would bring many technicalities and dilute our core message.
We conclude the paper by presenting some simulations, and discuss some conjectures, see Appendix~\ref{app}.

\subsection{Hammersley's Last Passage Percolation}\label{secH}

Let us take $m$ points independently as uniform random variables in the square $[0,1]^2$, and denote the coordinates of these points $Z_1:=(t_1,x_1), Z_2:=(t_2,x_2),$ etc...
We say that a sequence $(z_{i_\ell})_{1\leq \ell\leq k}$ is \emph{increasing} if
$t_{i_\ell} >t_{i_{\ell-1}}$ and $x_{i_{\ell}}> x_{i_{\ell-1}}$ for any $1\leq \ell\leq k$ (we set by convention $i_0=0$ and $z_0=(0,0)$).

Then, the question is to study the length of the longest increasing sequence among the $m$ points which is equivalent to the length of the longest increasing subsequence of a random (uniform) permutation of length $m$.
We denote:
\[\cL_m  := \sup \big\{ k\ ; \exists\, (i_1,\ldots,i_k) \ s.t.\ (Z_{i_\ell})_{1\leq \ell\leq k} \text{ is increasing}\big\} \, .\]

Using sub-additive techniques, Hammersley  \cite {Ha72} first proved that $m^{-1/2} \cL_m$ converges a.s. and in $L^1$ to some constant, that was believed to be $2$. Further works then proven that the constant was indeed $2$ \cite{LS77,VK77}%, and estimates related to $\cL_m$ were improved by a series of papers
. Moreover, and quite remarkably, this model has been shown to be exactly solvable by Baik, Deift and Johansson \cite{BDJ99}, and they identified the fluctuations of $\cL_m$ around $2\sqrt{m}$, showing that the model is in the so-called KPZ universality class. More precisely, in \cite{BDJ99}
the authors showed the following result.
\begin{theorem-non}
\label{thm:LPP}
The recentered and renormalized quantity $m^{-1/6} (\cL_m - 2\sqrt{m})$ converges in distribution
to the Tracy-Widom GUE distribution.
\end{theorem-non}
Moreover, Johansson \cite{Jo99} proved that the typical transversal fluctuations of a path collecting
the maximal number of points is of order $m^{-1/6}$.
(In \cite{Jo99} Johansson actually considers up-right paths going from $(0,0)$ to $(N,N)$ in a Poisson Point process of intensity~$1$: he shows that the typical transversal fluctuations away from the diagonal of a path collecting the maximal number of points is of order $N^{2/3}$.)
We also mention that in \cite{DZ95}, the case when the points are not chosen uniformly in $[0,1]^2$ but have some given density $p(x,y)$ has also been solved: the limiting constant $\lim_{n\to\infty} \cL_m/\sqrt{m}$ and the limiting curve  are identified.

\subsection{General definition of  path-constrained Last Passage Percolation}

We now perform a $45$ degree clockwise rotation, and generalize Hammersley's LPP by introducing a general constraint on paths (that can be either \emph{local} or \emph{global}): we introduce it via a notion of \emph{compatibility} of the points that can be collected.
We need three ingredients:
\begin{itemize}
\item a domain $\Lambda$;
\item a (finite or countable) random set of points $\gU \subset \Lambda$, whose elements are denoted by $Z_i = (t_i,x_i)$, its law is denoted $\mathbb{P}$;
\item a compatibility condition, \textit{i.e.}\ a set $\cC$ of compatible subsets of $\Lambda$. 
\end{itemize}
Then, we define the \emph{$\cC$-compatible Last-Passage Percolation} as the maximal number of $\cC$-compatible points in $\gU$, that is 
\begin{equation}
\cL_{\gU}^{(\cC)}=\cL_{\gU}^{(\cC)}(\Lambda) := \sup \Big\{  |\Delta|\,  ; \,  \Delta \subset \gU , \Delta \in \cC \Big\}\, .
\end{equation}

\begin{remark}\rm
This fits the definition of Hammersley's LPP as defined above: the compatibility set $\cC$ being the set of all increasing subsets of $[0,1]^2$.
After a rotation by 45$^\circ$, we end up with the domain $\Lambda:=\big\{(x,y), 0<x<\sqrt{2}, |y| \leq \min(1,1-t)\big\}$, and $\gU=\gU_m$ a set of $m$ independent uniform random variables in $\Lambda$. The compatibility set is then taken to be (with the convention $(t_0,x_0)=(0,0)$)
\[\cC=\bigcup_{k\ge 0} \Big\{  \Delta = \{(t_i,x_i)\}_{1\leq i\leq k} \ ; 0<t_1<\cdots< t_k <\sqrt{2} ,\  \frac{|x_i-x_{i-1}|}{|t_i-t_{i-1}|} \leq 1 \text{ for all } 1\le i\le k  \Big\} \, , \]
which corresponds to sets of points that can be collected via a $1$-Lipschitz function.
%The Poissonian (point-to-point) version of Hammersley's also LPP can also be recovered by considering $\gU$ a Poisson point process on $\bbR^2$ with intensity $\lambda>0$, and $\Lambda = [0,t] \times \bbR$, with the same $1$-Lipschitz compatibility condition as above.
\end{remark}

Now, there are at least two reasonable ways of defining the compatibility condition: 
(i) by replacing the Lipschitz condition by a H\"older constraint;
(ii) by considering a path-entropy constraint (a global constraint on the path), that also allows to deal with non-directed paths. 
We restrict ourselves to the case of the dimension $d=2$ for the simplicity of the exposition, but it can easily be extended to the case of higher dimensions.
%We start with the case of directed paths in Section~\ref{sec:LPP}, and then discuss the non-directed case in Section~\ref{sec:nondir}. We present some potential applications in Section~\ref{sec:applications}.

{
Several other constraints can be (and have been) considered, and let us mention a few. For instance the constraint that the path is convex has been studied in \cite{AB09}, and is related to the question of counting the number of lattice convex shapes, see \cite{Bar95,Sin94,Ver94} and more recently in \cite{BE16}. The question of pattern-avoiding permutation has also gained some interest recently, see in particular \cite{HRS17,MP16,MP14}.
%Thinking about a polymer model, one may also think of a local ``flexibility'' condition for the set $\Delta= (\Delta_i)_{1\le i\le k}$  by considering the constraint {$0\le \theta^{(1)}\le \inf_i |\theta_i | \le \sup_{i} | \theta_i |\le  \theta^{(2)}<\infty$, }with $\theta_i$ the angle between the segments $[\Delta_{i-1},\Delta_i]$ and $[\Delta_i,\Delta_{i+1}]$ (and $\theta_0:=0$). This would model the stiffness of the polymer. In this paper we do not pursue in this direction.
%We also mention that in \cite{Z11}, the author considers a related problem: the question is to obtain criteria for the existence of ``regular functions'' $f: \bbR \to\bbR$ (with several type of constraints, such as continuity, bounded variations, etc...) whose graph interpolates between random subsets of parallel vertical lines. This can be thought as a first passage percolation analogue to our problem, with a different distribution for the set of points considered.
}

\section{Directed LPP: H\"older and entropy constraints}
\label{sec:LPP}

In this section, we consider directed paths. We work with a domain $\Lambda_{t,x} = [0,t]\times [-x,x]$, for some (fixed) $t,x>0$. Then, we let $m$ independent r.v.\ uniform in $\Lambda_{t,x}$ form the set~$\gU_m$. We will use  $\cL_m$ as a short notation for $\cL_{\gU_m}$.
Moreover, we say that a set  $\Delta = \{(t_i,x_i)\}_{1\leq i\leq k}\subset \bbR_+ \times \bbR$ is \emph{directed} if
$0<t_1<\cdots<t_k$. 
We deal first with the (local) H\"older constraint, before we turn to the (global) Entropy constraint.

\subsection{Local H\"older constraint}
\label{sec:HLPP}

For any $\gamma \ge 0$, we can define the $\gamma$-H\"older norm of a finite set $\Delta = (t_i,x_i)_{1\le i\le k}$ (in which the points are ordered $t_1<\cdots <t_k$, with the convention $(t_0,x_0)=(0,0)$)
\begin{equation}
\label{def:Holder}
\mathrm{H}_{\gamma} (\Delta) := \sup_{1\le i\le k}  
\   \frac{|x_i-x_{i-1}|}{|t_i-t_{i-1}|^{\gamma}}  \, .
\end{equation}
Notice that this is not the $\gamma$-H\"older norm  of the linear interpolation of the points, since \eqref{def:Holder} only considers consecutive points: one can think of this quantity as a \textit{local} $\gamma$-H\"older norm. In particular,  the case $\gamma>1$ is not trivial here, and the case $\gamma=0$ is also of interest.
%Notice that for $\gamma\le 1$,  it is related with the  $\gamma$-H\"older norm of the linear interpolation of the points, whereas for $\gamma>1$ all $\gamma$-H\"older paths are constants, so that our definition is more general. 
Then, for some fixed $A>0$, we define a compatibility set 
\begin{equation}
\cH_{\gamma}^{A} := 
\big\{ \Delta \subset \bbR_+ \times \bbR\, ;\, \Delta \text{ directed}, \, \mathrm{H}_{\gamma} (\Delta) \le A  \big\} \, 
%\bigcup_{k\ge 0} \Big\{  \Delta = \{(t_i,x_i)\}_{1\leq i\leq k}\subset \bbR_+ \times \bbR \ ; 0<t_1<\cdots< t_k  ,\  \mathrm{H}_{\gamma} (\Delta) \le A  \Big\} \, 
.\end{equation}

The $\gamma$-H\"older Last Passage Percolation, abbreviated as H$_{\gamma}$-LPP, is defined as
\begin{equation}
\label{def:HLPP}
\mathcal{L}_m^{(\cH_{\gamma}^{A})} ( \Lambda_{t,x}) := \sup \Big\{  |\Delta| \, ; \, \Delta \subset \gU_m , \Delta \in  \cH_{\gamma}^{A} \Big\} \, .
\end{equation}
We prove the following result.

\begin{theorem}
\label{thm:HLPP}
There are constants $c_1,c_2$ (depending only on $\gamma$, during the course of the proof one finds that $c_1\le c(1+\gamma)^{-1/2} $) such that for any $t,x$ and $B$, for any $1\le k\le m$
\begin{align}
\label{HLPP-upper}
\bbP\Big( \mathcal{L}_m^{(\cH_{\gamma}^{A})} (\Lambda_{t,x}) \ge k  \Big) & \le  \Big( \frac{c_1 A t^{\gamma} m }{x k^{1+\gamma}} \Big)^k \, ,\\
\bbP\Big( \mathcal{L}_m^{(\cH_{\gamma}^{A})} (\Lambda_{t,x}) \le  k \Big) & \le  \exp\bigg\{ c_2 k \Big( 1- c_2 \Big( \frac{At^{\gamma}}{ x k^{\gamma} } \wedge 1   \Big)  \frac{m}{k} \Big) \bigg\} \, .
\label{HLPP-lower}
\end{align}
As a consequence, there is some $C>0$ such that for any fixed $t,x,\gamma,A$, $\bbP$-a.s.\ there is some $m_0$ such that
\[\frac{1}{C}\le  \frac{\mathcal{L}_m^{(\cH_{\gamma}^{A})} (\Lambda_{t,x}) }{ (At^{\gamma}/x )^{1/(1+\gamma)} m^{1/(1+\gamma)} } \le C \quad \text{ for all } m\ge m_0 \, .\]
\end{theorem}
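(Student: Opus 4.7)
The plan is to prove the upper bound via a first-moment computation, the lower bound via a disjoint-rectangle construction combined with a union bound, and finally deduce the a.s.\ statement by Borel--Cantelli.

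\textbf{Upper bound.} I would bound $\bbP(\mathcal{L}_m^{(\cH_\gamma^A)}(\Lambda_{t,x}) \ge k)$ by the expected number of $\cH_\gamma^A$-compatible $k$-subsets of $\gU_m$. This quantity equals $\binom{m}{k} k!\, \tilde{p}_k$, where $\tilde{p}_k$ is the probability that $k$ i.i.d.\ uniform points $Z_1,\dots,Z_k$ in $\Lambda_{t,x}$ appear in $t$-order \emph{and} satisfy the H\"older condition. Integrating out the $x$-coordinates first yields a factor at most $\prod_{i=1}^k \min(2A(t_i-t_{i-1})^\gamma, 2x)/(2tx)$; dropping the minimum gives the crude bound $(A/(tx))^k \prod_i (t_i-t_{i-1})^\gamma$. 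After the substitution $\tau_i = t_i-t_{i-1}$, the remaining integral is the Dirichlet-type integral
\[
\int_{\tau_i>0,\ \sum \tau_i \le t} \prod_i \tau_i^\gamma\, d\tau_i \;=\; t^{k(1+\gamma)} \frac{\Gamma(1+\gamma)^k}{\Gamma(1+k(1+\gamma))}.
\]
Combining with $\binom{m}{k}k! \le m^k$ and Stirling's formula for the gamma function in the denominator yields (HLPP-upper) with $c_1$ comparable to $\Gamma(1+\gamma)\,e^{1+\gamma}/(1+\gamma)^{1+\gamma}$, which by a second Stirling estimate is $\le c(1+\gamma)^{-1/2}$ as claimed.

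\textbf{Lower bound.} I would explicitly build $K=2k$ disjoint rectangles from which any selection of one point per non-empty rectangle automatically forms an $\cH_\gamma^A$-compatible set. Partition $[0,t]$ into $2K$ intervals of length $\delta = t/(2K)$ and set $R_i = [(2i-1)\delta, 2i\delta] \times [-\eta/2, \eta/2]$ for $i=1,\dots,K$, with $\eta = \min(A\delta^\gamma, 2x)$. Consecutive selected points then have $t$-gap $\ge \delta$ and $x$-gap $\le \eta \le A\delta^\gamma$, so the local H\"older condition holds. Hence $\mathcal{L}_m^{(\cH_\gamma^A)} \ge N$, where $N$ is the number of non-empty $R_i$. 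Writing $Y = K-N$ and applying a union bound over the $\binom{K}{K-k}$ choices of a family of $K-k$ rectangles that must all be empty,
\[
\bbP(\mathcal{L}_m^{(\cH_\gamma^A)} \le k) \;\le\; \bbP(Y \ge K-k) \;\le\; \binom{K}{K-k}(1-(K-k)q)^m,
\]
where $q = |R_i|/(2tx)$ is, up to a universal constant, equal to $\rho_k/k$ with $\rho_k := (At^\gamma/(xk^\gamma))\wedge 1$. Bounding $\binom{2k}{k}\le 4^k$ and $(1-kq)^m \le \exp(-kqm)$ produces a bound of the form $\exp(c_2 k - c_2^2 \rho_k m)$, which is exactly (HLPP-lower).

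\textbf{Almost-sure conclusion.} For the upper direction, take $k_m = \lceil C(At^\gamma/x)^{1/(1+\gamma)} m^{1/(1+\gamma)}\rceil$ with $C$ large enough that $c_1/C^{1+\gamma} \le 1/2$; then (HLPP-upper) gives $\bbP(\mathcal{L}_m \ge k_m)\le 2^{-k_m}$, summable in $m$. For the lower direction, take $k_m = \lfloor c(At^\gamma/x)^{1/(1+\gamma)} m^{1/(1+\gamma)}\rfloor$ with $c$ small enough that $c_2^2 \rho_{k_m} m \ge 2 c_2 k_m$; then (HLPP-lower) gives $\bbP(\mathcal{L}_m \le k_m)\le e^{-c_2 k_m}$, again summable. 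Borel--Cantelli yields the two a.s.\ bounds and hence the sandwich in the statement.

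\textbf{Main obstacle.} The delicate part is the Stirling-based asymptotics of the Dirichlet integral in the upper bound, which must track the $\gamma$-dependence carefully to extract the $c_1 \le c(1+\gamma)^{-1/2}$ behaviour. The lower bound is more mechanical, the only subtlety being the geometric design of the rectangles (gap of length $\delta$ between consecutive $R_i$) so that the H\"older constraint is satisfied for \emph{any} choice of representative points, independently of which rectangles happen to be non-empty.
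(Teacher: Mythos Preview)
Your proposal is correct and follows essentially the same approach as the paper: a first-moment bound via the Dirichlet integral $\int_{\sum\tau_i\le t}\prod\tau_i^\gamma\,d\tau = t^{k(1+\gamma)}\Gamma(1+\gamma)^k/\Gamma(1+k(1+\gamma))$ followed by Stirling for the upper bound, and a construction with $2k$ evenly spaced sub-rectangles plus a union bound on the empty ones for the lower bound, with Borel--Cantelli for the a.s.\ statement. Your rectangle height $\eta=A\delta^\gamma$ (with $\delta=t/(4k)$) is in fact slightly cleaner than the paper's choice, since it gives the H\"older ratio $\le A$ directly rather than $\le 4^\gamma A$.
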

We stress that the constants in \eqref{HLPP-upper}-\eqref{HLPP-lower} are uniform in the parameters $m,A,t,x$: the results are still valid when considering the situation when $A,t,x \to \infty$ as $m\to\infty$, which is useful for some applications, see \cite{cf:BT_HT}. 
%Note that we could define a point-to-point version of the H$_{\gamma}$-LPP, by adding the condition that $(t,0)\in \Delta$: a result analogous to Theorem~\ref{thm:HLPP} then holds.

We have that  $\mathcal{L}_m^{(\cH_{\gamma}^{A})} =\cL_m^{(\cH_{\gamma}^{A})}(\Lambda_{t,x})$ is of order $m^{\kappa}$, with $\kappa=1/(1+\gamma)$. Then, it is very natural to expect that $\mathcal{L}_m^{(\cH_{\gamma}^{A})}   / m^{\kappa}$  converges a.s.\ to a constant as $m\to\infty$: we discuss this convergence in Section~\ref{sec:poisson}, see in particular Remark~\ref{rem:converge}. The value of the constant is discussed in Appendix~\ref{app}.

%
%Let us also discuss briefly about the (conjectured) transversal fluctuations of a maximal  path (that is a path collecting the maximal number of points). 
%We already have that $\cL_m^{(\cH_{\gamma}^{A})}$ is of order $m^{\kappa}$, with $\kappa$ going to $1$ as $\gamma \downarrow 0$. 
%Then, the transversal fluctuations of a maximal path should be of order $m^{-\zeta}$ with $\zeta=\zeta(\gamma)$ decreasing as $\gamma$ decreases, up to some point where $\zeta$ reaches the value $0$ (at which point a maximal path has transversal fluctuations of order $1$, see Figure~\ref{fig:HLPP} for an illustration). As discussed below (see in particular Section~\ref{sec:open}-(b)), it is natural to conjecture that $\zeta = (1-5\kappa/3)\vee 0 =\frac{\gamma -2/3}{1+\gamma} \vee 0$: transversal fluctuations should be much smaller than~$1$ when $\gamma>2/3$ ($\kappa <3/5$) and of order $1$ when $\gamma<2/3$ ($\kappa>3/5$).

\begin{remark}\rm 
One could naturally generalize H\"older LPP to a \emph{cone-shaped} LPP: define a  region $\mathcal R =\{ (t,x) \in  \bbR_+ \times \bbR, f_2(t) \le x\le f_1(t)\}$, with $f_1\le f_2$ two functions $\bbR_+ \to \bbR$, and let the compatibility condition for $\Delta$ be that for any $(t_{i-1},x_{i-1}),(t_{i},x_i) \in \Delta$ we have $(t_i-t_{i-1},x_i-x_{i-1}) \in \mathcal R$ (\textit{i.e.}\ the next point in $\Delta$ has to be in the cone-shaped region $\mathcal R$ from the previous point).
In this framework, H$_{\gamma}$-LPP is simply the \emph{cone-shaped} LPP with $\mathcal R = \{ (t,x), -t^{\gamma} \le x\le t^{\gamma}\}$, and one could easily adapt the proof of Theorem~\ref{thm:HLPP}: the key quantity is $V(u) = \int_0^u |f_1-f_2|(v) dv$, the area of $\mathcal R$ close to the origin, and  one finds that $\cL_m$ is of the order of $V^{-1}(1/m)$ (recovering the $m^{1/(1+\gamma)}$ in the H\"older case).
\end{remark}

\subsection{Global Entropy constraint}
\label{sec:ELPP}

Another type of constraint that is natural to consider is a \emph{global} constraint: we talk about an entropy constraint, since it arises naturally when considering random walk paths (the entropy being a measure of the non-likelihood of a path).
This is a generalization of the study initiated in~\cite{cf:BT_ELPP}, which was motivated by applications to directed polymer in random heavy-tail environment and helped answer Conjecture~1.7 in \cite{cf:DZ}.
For any $a\ge b\ge 0$, $a>0$, we define the $(a,b)$-Entropy of a set $\Delta = (t_i,x_i)_{1\le i\le k}$ (again, the points are ordered $t_1<\cdots <t_k$, and we use the convention $(t_0,x_0)=(0,0)$)
\begin{equation}
\label{def:ent}
\ent_{a,b} (\Delta) := \sum_{i=1}^k \frac{|x_i-x_{i-1}|^a}{ |t_i-t_{i-1}|^b}\, .
\end{equation}
In particular, we are interested in two special subcases. First, when $b>0$ and $a=b+1$: in that case, we can generalize the notion of entropy to continuous paths $s:[0,t]\to \bbR$, by $\ent_{b}(s) = \int_0^t |s'(u)|^b du$, corresponding to  the $L^b$ norm of $s'$ (it is related to the $(1,b)$-Sobolev norm of $s$) and the entropy of a set $\Delta$ corresponds to the entropy of the linear interpolation of $\Delta$.
Second, when $b=0$: then the entropy can be generalized to non-necessarily continuous paths $s:[0,t]\to \bbR$, by $\ent_a(s) = \sup \big\{ \sum_i  |s(t_i)-s(t_{i-1}) |^a \big\}$, the supremum being over all finite subdivisions $t_1<\cdots<t_k$ of $[0,t]$. This corresponds to the ``$a$-variation'' norm of $s$ (the total variation for $a=1$, and the quadratic variation for $a=2$).
%Note also that, considering $b>0$ and $a=b/\gamma$ in \eqref{def:ent}, we have that
%\[
%\Big(\ent_{b/\gamma,b} (\Delta)\Big)^{\gamma/b} = \Big(\sum_{i=1}^k \frac{|x_i-x_{i-1}|^{b/\gamma}}{ |t_i-t_{i-1}|^b} \Big)^{\gamma/b} \ \xrightarrow[b\to\infty]{}  \sup_{1\le i\le k} \frac{|x_i-x_{i-1}|}{|t_i-t_{i-1}|^{\gamma}} \, ,\]
%so that when $b\to\infty$ we formally recover the $\gamma$-H\"older norm of $\Delta$ \eqref{def:Holder}.
%We work with the specific setting of \eqref{def:ent} because all calculations can be carried out explicitly, but usually, even with a more general definition,  the entropy of a path can be compared (up to constants) to an entropy of the form \eqref{def:ent}, see Remark \ref{rem:pathentropy}.

\smallskip

For fixed $B>0$, we define a compatibility set 
\begin{equation}
\cE_{a,b}^{B} := 
%\bigcup_{k\ge 1}\Big\{ \Delta=\{(t_i,x_i)\}_{1\le i \le k} \subset \bbR_+ \times \bbR \, ;\, 0<t_1<\dots<t_k, \,  \ent_{a,b} (\Delta) \le B  \Big\} \, ,
\Big\{ \Delta \subset \bbR_+ \times \bbR \, ;\, \Delta \text{ directed}, \,  \ent_{a,b} (\Delta) \le B  \Big\} \, ,
\end{equation}
so that a set of points $\Delta$ is compatible if it can be collected by a path with entropy smaller than $B$.
The path-Entropy constrained LPP, abbreviated as E-LPP, is then defined as
\begin{equation}
\mathcal{L}_m^{(\cE_{a,b}^{B})} (\Lambda_{t,x}) := \sup \Big\{  |\Delta| \, ; \, \Delta \subset \gU_m , \Delta \in  \cE_{a,b}^{B} \Big\} \, .
\end{equation}
We prove the following result.
%(Again, we could define a point-to-point version of the E-LPP, by adding the condition that $(t,0)\in \Delta$: an analogous result would then hold for the point-to-point E-LPP.)

\begin{theorem}
\label{thm:ELPP}
There are constants $c_3,c_4$ (depending only on $a,b$) such that for any $t,x$ and any $B$, for any $1\le k\le m$
\begin{align}
\label{ELPP-upper}
\bbP\Big( \mathcal{L}_m^{(\cE_{a,b}^{B})} (\Lambda_{t,x}) \ge  k\Big) & \le  \Big( \frac{c_3 ( B  t^b/x^a)^{1/a} m }{k^{(a+b+1)/a}} \Big)^{ k}\, ,\\
\bbP\Big( \mathcal{L}_m^{(\cE_{a,b}^{B})} (\Lambda_{t,x}) \le  k\Big) & \le  \exp\bigg\{ c_4 k \Big( 1- c_4 \Big( \frac{ (B t^b/x^a)^{1/a} }{ k^{(a+b)/a} }\wedge 1   \Big)  \frac{m}{k} \Big) \bigg\}\, .
\label{ELPP-lower}
\end{align}
As a consequence, there is a constant $C$ such that for any fixed $t,x,a,b,B$, $\bbP$-a.s.\ there is some $m_0$ such that
\[ \frac{1}{C}\le  \frac{\mathcal{L}_m^{(\cE_{a,b}^{B})} (\Lambda_{t,x}) }{ ( B  t^b/x^a)^{1/(a+b+1)}  m^{a/(a+b+1)} } \le C \quad \text{ for all } m\ge m_0 \, .\]
\end{theorem}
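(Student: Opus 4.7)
The plan is to mirror the two-step strategy of Theorem~\ref{thm:HLPP}: a first-moment union bound for the upper tail~\eqref{ELPP-upper}, a slab-partition argument for the lower tail~\eqref{ELPP-lower}, and then a Borel--Cantelli argument at $k$ of order $k_*:=(Bt^b/x^a)^{1/(a+b+1)}m^{a/(a+b+1)}$ to promote the two tail estimates into the almost-sure two-sided bound.

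For the upper bound, I would start from
\[
\bbP\bigl(\cL_m^{(\cE_{a,b}^B)}(\Lambda_{t,x})\ge k\bigr)\le \binom{m}{k}\, p_k,
\]
where $p_k$ is the probability that $k$ independent uniform points in $\Lambda_{t,x}$, sorted by their time coordinate, belong to $\cE_{a,b}^B$. The core computation is to bound $p_k$ by passing to the increments $u_i=t_i-t_{i-1}$, $y_i=x_i-x_{i-1}$ (dropping the spatial constraints $|x_i|\le x$ for the upper bound), and then substituting $v_i=y_i/u_i^{b/a}$, so that the entropy constraint $\sum u_i^{-b}|y_i|^a\le B$ separates as $\sum|v_i|^a\le B$. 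What remains is to multiply a Dirichlet-type integral $\int\prod_i u_i^{b/a}\,du$ on $\{u_i>0,\,\sum u_i\le t\}$ by the volume of the $\ell^a$-ball $\{\sum|v_i|^a\le B\}$, both explicit in terms of Gamma functions; combining with $\binom{m}{k}$ and invoking Stirling then yields the form $(c_3 (Bt^b/x^a)^{1/a} m/k^{(a+b+1)/a})^k$.

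For the lower bound, I would partition $[0,t]$ into $k$ equal slabs of width $t/k$ and, inside each slab, consider a horizontal window $\{|y|\le r\}$ with $r$ chosen so that \emph{any} $k$-tuple consisting of one point per slab with time coordinates separated by at least a constant times $t/k$ (for instance by restricting to the middle third of each slab) automatically has entropy at most $B$; a direct calculation fixes $r$ of order $(Bt^b)^{1/a}/k^{(b+1)/a}$. The event that every slab-window is non-empty is then contained in $\{\cL_m^{(\cE_{a,b}^B)}\ge k\}$. Switching to Poissonization (replacing the $m$ uniform points by a Poisson point process of intensity $m/(2tx)$ on $\Lambda_{t,x}$), the slab-window occupancies become independent, so the count of good slabs is $\mathrm{Bin}(k,\,1-e^{-\lambda})$ with $\lambda=mr/(kx)$; a Chernoff lower-tail estimate for this binomial, together with the standard transfer back to the fixed-$m$ model, produces the exponential form of~\eqref{ELPP-lower}.

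The main technical obstacle is the calibration of the slab argument: the window width $r$ must simultaneously be (i)~small enough that any candidate $k$-path picking up one point per slab has entropy $\le B$, and (ii)~large enough that $\lambda=mr/(kx)$ makes each slab good with high probability. Restricting the selected points to the middle third of each slab is what allows one to enforce a deterministic lower bound on the time increments, and thus to convert (i) into the sharp scaling of $r$ announced above. Once \eqref{ELPP-upper}--\eqref{ELPP-lower} are established, the almost-sure two-sided bound follows by applying Borel--Cantelli with $k=Ck_*$ in \eqref{ELPP-upper} and $k=k_*/C$ in \eqref{ELPP-lower}, for $C$ large enough that both resulting series in $m$ converge.
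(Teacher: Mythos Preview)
Your upper-bound argument is exactly the paper's, only the volume of
\[
E_k(t,B)=\Big\{(t_i,x_i)_{1\le i\le k}:0<t_1<\cdots<t_k<t,\ \textstyle\sum_i |x_i-x_{i-1}|^a/|t_i-t_{i-1}|^b\le B\Big\}
\]
is obtained differently: the paper uses the recursion $\mathrm{Vol}(E_k(t,B))=2\int_0^t\!\int_0^{(Bu^b)^{1/a}}\mathrm{Vol}(E_{k-1}(t-u,B-y^a/u^b))\,dy\,du$ and solves it by induction, whereas your substitution $v_i=y_i/u_i^{b/a}$ decouples the integral into a Dirichlet simplex integral times an $\ell^a$-ball volume. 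Both give the same Gamma-function expression and the same $(c_3(Bt^b/x^a)^{1/a}m/k^{(a+b+1)/a})^k$ after Stirling.

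For the lower bound your idea is right but the calibration, as written, does not deliver~\eqref{ELPP-lower}. With exactly $k$ slabs the event you describe (``every slab non-empty'') only yields $\bbP(\cL_m<k)\le k\,e^{-\lambda}$, which at $k\sim k_*/C$ has $\lambda$ of constant order and is therefore not summable in $m$; a Chernoff ``lower tail'' of $\mathrm{Bin}(k,1-e^{-\lambda})$ at threshold $k$ is meaningless since the count cannot exceed $k$. The fix is to take strictly more slabs than the target: the paper cuts $[0,t]$ into $4k$ sub-intervals, keeps the $2k$ even-indexed boxes $\cB_{2i}$ (this plays the role of your ``middle third'' trick, guaranteeing a time gap $\ge t/(4k)$ between consecutive occupied boxes), and asks that at least $k$ of these $2k$ boxes be non-empty. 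No Poissonization is needed: a plain union bound over which $k$ of the $2k$ boxes are empty gives
\[
\bbP\big(\cL_m^{(\cE_{a,b}^B)}\le k\big)\ \le\ \binom{2k}{k}\,\bbP\Big(\gU_m\cap\bigcup_{i=1}^k\cB_i=\emptyset\Big)\ \le\ 4^k\Big(1-\tfrac{|\bigcup_i\cB_i|}{|\Lambda_{t,x}|}\Big)^{m},
\]
and $(1-V)^m\le e^{-mV}$ produces the exponential-in-$k$ form directly. Your Poissonization route would also work once you use $2k$ slabs and target $k$ occupied ones, but it adds an unnecessary de-Poissonization step.

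The Borel--Cantelli conclusion is as you say.
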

Again, the constants are uniform in the different parameters (and explicit, see the proof of Theorem~\ref{thm:ELPP}), and this fact could reveal to be useful, in particular for the problem developed in Section~\ref{sec:appli1}.

Also here, $\cL_m^{(\cE_{a,b}^{B})}=\cL_m^{(\cE_{a,b}^{B})} (\Lambda_{t,x})$ is of order $m^{\kappa}$ with $\kappa=a/(a+b+1)$, and it is natural to expect that $\cL_m^{(\cE_{a,b}^{B})}/ m^{\kappa}$  converges a.s.\ to a constant as $m\to\infty$. This convergence is discussed in Section~\ref{sec:poisson}, and the value of the constant in Appendix~\ref{app}.
Notice that in the case where $a=b+1$ (which is one of the most natural, since it arises from LDP of random walks, see Remark~\ref{rem:pathentropy}), we find $\kappa=1/2$, exactly as in the case of a Lipschitz constraint. In the case $b=0$, we find $\kappa=a/(a+1)$ so $\kappa=1/2$ when $a=1$ (total variation case) and $\kappa=2/3$ when $a=2$ (quadratic variation case).
%As far as the transversal fluctuations of a maximal path are concerned, we argue in point (b) of Section~\ref{sec:open} that it should be of order $m^{-\zeta}$, with $\zeta = (1-5\kappa/3)\vee 0 = \frac{b+1- 2a/3}{a+b+1}\vee 0 $: transversal fluctuations should be much smaller than~$1$ for $\kappa<3/5$, and reach order $1$ for $\kappa>3/5$. See Figure \ref{fig:ELPP} for an illustration.

\begin{remark}\rm
\label{rem:pathentropy}
The entropy of a set~$\gD$ as defined in \eqref{def:ent} appears naturally when considering large deviations for random walks: consider $S$  a symmetric random walk with unbounded jumps, with stretch exponential tail $\bP(S_1=x) \stackrel{x\to\infty}\sim e^{- |x|^{\nu}}$, for some $\nu>0$ (the case of the usual simple random walk corresponds to taking $\nu=\infty$). Then, when considering the probability that a point $(n,x_n)$ (with $n\to\infty$, $x_n \gg \sqrt{n}$) is visited (or collected) by the simple random walk path, we realize that
\begin{equation}
\label{eq:LDP}
-\log \bP(S_n = x_n) \stackrel{n\to\infty}{\sim}
\begin{cases}
n I(x_n/n) &\quad  \text{if } \nu>1, \text{ or }  \nu\in(0,1)\text{ and } x_n \ll n^{1/(2-\nu)} \, ,\\
J(x_n) & \quad \text{if } \nu\in(0,1)\text{ and } x_n \gg n^{1/(2-\nu)},
\end{cases}
\end{equation}
with some LDP rate functions $I(\cdot),J(\cdot)$. More specifically, we have $I(x)\sim x^2/2$ as $x\to 0$ (moderate deviation regime, see \cite{Cramer} for the standard Cram\'er case, \cite{Nag69} for the case $\nu\in(0,1)$), $I(x) = x^{\nu}$ as $x\to \infty$ (super-large deviation, see \cite[Thm.~2.1]{Nag79}), and $J(x)= x^{\nu}$ (one-jump deviation, see \cite[Thm.~2.1]{Nag79}). 
Hence, the entropy defined in \eqref{def:ent} is the natural scaling limit of the $\log$-probability that a random walk path visits a given set of points. We chose the specific form \eqref{def:ent} instead of using general LDP rate functions $I(\cdot),J(\cdot)$ because: (i) we are able to perform computations with this formula, (ii) we can usually bound the rate function $c|x|^a\le I(x)\le c' |x|^a$ for some $a>0$. In \eqref{eq:LDP}, we therefore have: in the first part $a=2,b=1$ if $x_n/n\to 0$ or $a=\nu, b=\nu-1$ ($\nu>1$) if $x_n/n\to\infty$; in the second part, $a=\nu,b=0$.
However we keep the parameters $a,b$ in the definition \eqref{def:ent}, to be able to deal with all these cases at once.
\end{remark}

%\begin{remark}\label{rem:LegeLh}
%\rm
%Note that we have a comparison between the H\"older and Entropy LPP: indeed, we observe that for $\Lambda\subset [0,t]\times \bbR$, we have $\cH_{\gamma}^{A}\subset \cE_{a,b}^{B}$ with $\gamma=(1+b)/a$ and $B=A^a t$. 
%This is due to the fact that for any $\Delta = \{(t_i,x_i)\}_{1\le i\le k}$ with $\mathrm{H}_{\gamma}(\Delta)\le A$, we get that, using $\gamma=(1+b)/a$
%\[ \ent_{a,b}(\Delta) = \sum_{i=1}^k \frac{|x_i-x_{i-1}|^a}{|t_i-t_{i-1}|^b} \leq \sum_{i=1}^k A^a |t_i-t_{i-1}|^{a\gamma-b} \le A^a t \, .\]
% This gives that $\cL_m^{(\cE_{a,b}^{A^a t})} (\Lambda) \geq \cL_m^{(\cH_{(1+b)/a}^{A})}(\Lambda)$.
%On the other hand, it is not possible to get the other bound simply by comparison between local and global constraints.
%\end{remark}

\subsection{Poissonian (point-to-point) version of path-constrained LPP}
\label{sec:poisson}

Similarly to the standard LPP, we can define a Poissonian (point-to-point) version of the path constrained LPP, reproducing the idea of Hammersley \cite{Ha72} to prove the convergence of $\cL_m/\sqrt{m}$. 

For any $\lambda>0$, let $\gU_{\lambda}$ be a Poisson point process of intensity $\lambda$ on $\bbR^2$, and we define the point-to-point version of constrained LPPs. Let us consider $z=(x,y)\in \bbR^2$. For a given set $\Delta \subset \mathbb R \times (0,y)$, we set $\Delta^{(z)} = \Delta\cup \{z\}$ so that it extends $\Delta$ to make it end at~$z$.
We consider the domain $\Lambda_t=[0,t]\times \bbR$ for $t>0$, and for any $A>0$, $B>0$, we define the point-to-point constrained LPP:

\begin{align*}
\cL_{\gU_\lambda}^{(\cH_{\gamma}^{A})}(t)= \cL_{\lambda}^{(\cH_{\gamma}^{A})}(t) &:= \sup\Big\{ |\Delta| ; \Delta \subset \gU_{\lambda} \cap \Lambda_t  , \Delta \text{ directed},  \mathrm{H}_{\gamma}(\Delta^{(t,0)}) \le A \Big\} \, 
,\\
\cL_{\gU_\lambda}^{(\cE_{a,b}^{B})}(t) = \cL_{\lambda}^{(\cE_{a,b}^{B})}(t) &:= \sup\Big\{ |\Delta| ; \Delta \subset \gU_{\lambda} \cap \Lambda_t , \Delta \text{ directed}, \ent_{a,b}(\Delta^{(t,0)}) \le B t \Big\}  \, .
\end{align*}
Let us note that the entropy constraint grows linearly in $t$.
We realize that in both cases, $\big(\cL_{\lambda}^{(\cC)}(n)\big)_{n\ge 1}$ forms a super-additive ergodic sequence, in the sense that
\begin{equation}
\label{eq:subadd}
\cL_{\gU_\lambda}^{(\cC)}(n+\ell ) \ge \cL_{\gU_{\lambda}}^{(\cC)}(n)  + \cL_{\theta^n \gU_\lambda}^{(\cC)}(\ell) \, ,
\end{equation}
where $\theta^n$ is the  translation operator: $\theta^n(t,x) = (t+n,x)$.
The super-additivity comes from the fact that the concatenation of two sets have: (i) a H$_{\gamma}$ norm equal to the maximum of the H$_{\gamma}$ norms of the two sets; (ii) an entropy equal to the sum of the entropies of the two sets.
Therefore, Kingman's sub-additive ergodic theorem \cite{King} implies the existence of the limit 
$\lim_{n\to\infty} n^{-1} \cL_{\gU_\lambda}^{(\cC)}(n)$. In the following result we show that the limit is finite (and can be taken along the real line). 
\begin{proposition}\label{prop:subadditive}
For any $\lambda>0$, the limits
\begin{equation}
\label{subadditive}
\mathtt{C}_{\lambda,A}^{\rm H}= \lim_{t\to\infty}  \frac{1}{t} \cL_{\lambda}^{(\cH_{\gamma}^{A})}(t) , \qquad  
\mathtt{C}_{\lambda,B}^{\rm E} = \lim_{t\to\infty}  \frac{1}{t} \cL_{\lambda}^{(\cE_{a,b}^{B})}(t)
\end{equation}
exist a.s.\ and in $L^1$, and are finite, constant $\bbP$-a.s. 
Moreover the constants $\mathtt{C}_{\lambda, A}^{\rm H}$ and $\mathtt{C}_{\lambda,B}^{\rm E}$ satisfy the following scaling relations
\begin{equation}
\label{eq:scalingC}
\mathtt{C}_{\lambda, A}^{\rm H}=  
(\lambda A)^{\frac{1}{1+\gamma}}\, \mathtt{C}^{\rm H}_{1,1} \ ; \  \mathtt{C}_{\lambda, B}^{\rm E}(u)= (\lambda B^{1/a})^{\frac{a}{a+b+1}}\,  \mathtt{C}^{\rm E}_{1,1} .
\end{equation}
\end{proposition}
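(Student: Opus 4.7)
The plan is to apply Kingman's super-additive ergodic theorem to the integer sequences $X_n := \cL_\lambda^{(\cH_\gamma^A)}(n)$ and $Y_n := \cL_\lambda^{(\cE_{a,b}^B)}(n)$, and then to pass to continuous $t$ by monotonicity. The super-additivity $X_{n+\ell} \ge X_n + X_\ell \circ \theta^n$ is exactly \eqref{eq:subadd}: concatenating two compatible paths through the common point $(n,0)$ preserves compatibility, since the $\mathrm{H}_\gamma$-norm of a union is the maximum of the two norms, and the entropy is the sum (which is bounded by $Bn + B\ell = B(n+\ell)$). The law of $\gU_\lambda$ is invariant under the shift $\theta^1$, and both $\mathrm{H}_\gamma$ and $\ent_{a,b}$ are invariant under horizontal translations, so the process is stationary. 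Ergodicity (in fact mixing) of non-trivial translations of a Poisson point process is standard. Hence, provided the $X_n, Y_n$ are integrable, Kingman's theorem gives a.s.\ and $L^1$ convergence of $n^{-1}X_n$ and $n^{-1}Y_n$ to finite constants, which ergodicity forces to be a.s.\ deterministic.

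For integrability and finiteness of the limits, I would observe that every point $(s,y)$ in a compatible set with forced endpoints $(0,0)$ and $(t,0)$ lies in a bounded rectangle. In the H-case, applying the constraint to both the segment $(0,0)\to(s,y)$ and $(s,y)\to(t,0)$ forces $|y| \le A\min(s,t-s)^\gamma \le A(t/2)^\gamma$; in the E-case, the two corresponding entropy terms give $|y|^a\le C\,Bt^{b+1}$. Call these rectangles $R_t^{\mathrm{H}}$ and $R_t^{\mathrm{E}}$; their areas are of orders $At^{\gamma+1}$ and $B^{1/a} t^{(a+b+1)/a}$ respectively. Conditionally on $N_t := |\gU_\lambda \cap R_t|$ (Poisson with the corresponding parameter times $\lambda$), the points are i.i.d.\ uniform in $R_t$, so \eqref{HLPP-upper} (resp.\ \eqref{ELPP-upper}) applied with $m=N_t$ and with the half-width of the rectangle as $x$ yields $\cL_\lambda^{(\cC)}(t) \le C\, N_t^{\kappa}$ up to a negligible correction, with $\kappa = 1/(1+\gamma)$ (resp.\ $\kappa = a/(a+b+1)$). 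Jensen's inequality applied to the concave map $u\mapsto u^{\kappa}$ then gives $\bbE[\cL_\lambda^{(\cH_\gamma^A)}(t)] \le C(\lambda A)^{1/(1+\gamma)}\, t$ and $\bbE[\cL_\lambda^{(\cE_{a,b}^B)}(t)] \le C(\lambda B^{1/a})^{a/(a+b+1)}\, t$, uniformly in $t$. This yields both integrability of $X_1, Y_1$ and finiteness of the limiting constants. Extension from integer $n$ to real $t$ is immediate by monotonicity: any set compatible for horizon $s<t$ is compatible for horizon $t$ since the flat extension $(s,0)\to(t,0)$ contributes zero to both the H\"older norm and the entropy, hence $\cL_\lambda^{(\cC)}(\lfloor t\rfloor) \le \cL_\lambda^{(\cC)}(t) \le \cL_\lambda^{(\cC)}(\lceil t\rceil)$.

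The scaling relations follow from the linear rescaling $\Phi_{\alpha,\beta}:(s,y)\mapsto(\alpha s,\beta y)$: the image of $\gU_\lambda$ is a Poisson process of intensity $\lambda/(\alpha\beta)$, $\mathrm{H}_\gamma$ gets multiplied by $\beta/\alpha^\gamma$, $\ent_{a,b}$ by $\beta^a/\alpha^b$, and the time horizon $t$ becomes $\alpha t$. For the H-case, I choose $\alpha,\beta>0$ with $\alpha\beta=\lambda$ and $A\beta/\alpha^\gamma = 1$, giving $\alpha = (\lambda A)^{1/(1+\gamma)}$; then $\cL_\lambda^{(\cH_\gamma^A)}(t)$ has the same law as $\cL_1^{(\cH_\gamma^1)}(\alpha t)$, and dividing by $t$ and letting $t\to\infty$ yields $\mathtt{C}_{\lambda,A}^{\mathrm H} = \alpha\, \mathtt{C}_{1,1}^{\mathrm H} = (\lambda A)^{1/(1+\gamma)}\mathtt{C}_{1,1}^{\mathrm H}$. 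For the E-case, the analogous system $\alpha\beta=\lambda$ and $B\beta^a/\alpha^{b+1}=1$ (the latter ensuring the new constraint $\le 1\cdot (\alpha t)$ matches the old one $\le Bt$) solves to $\alpha = (\lambda B^{1/a})^{a/(a+b+1)}$, matching \eqref{eq:scalingC}. The main obstacle in the proof is the integrability estimate: the crude bound $\cL_\lambda^{(\cC)}(t)\le N_t$ gives $\bbE[X_t]/t$ of order $t^\gamma$ (respectively $t^{(b+1)/a}$), which diverges as $t\to\infty$; the power-of-$k$ decay in \eqref{HLPP-upper}--\eqref{ELPP-upper} is essential to upgrade $N_t$ to $N_t^{\kappa}$ with the correct $\kappa$ and make $\sup_n n^{-1}\bbE[X_n]$ finite.
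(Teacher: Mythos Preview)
Your proof is correct and follows the same overall scheme as the paper: Kingman's theorem on the integer sequence via the super-additivity \eqref{eq:subadd}, the sandwich $\cL(\lfloor t\rfloor)\le\cL(t)\le\cL(\lceil t\rceil)$ to pass to real $t$, the tail bounds of Theorems~\ref{thm:HLPP}--\ref{thm:ELPP} for finiteness, and linear rescalings for \eqref{eq:scalingC}.

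The execution of the finiteness step is organized differently and is arguably cleaner. The paper first removes the point-to-point constraint, which forces it to work in the unbounded strip $\Lambda_{1,\infty}$, approximate by boxes $\Lambda_{1,j}$ with $j\to\infty$, and de-Poissonize along that limit to obtain an almost-sure bound. You instead exploit the point-to-point constraint to confine all admissible points to a single bounded rectangle $R_t$, condition on $N_t=|\gU_\lambda\cap R_t|$, apply \eqref{HLPP-upper}/\eqref{ELPP-upper} conditionally to get $\bbE[\cL\mid N_t]\le C N_t^{\kappa}$, and then use Jensen with $u\mapsto u^\kappa$ to bound $\bbE[\cL(t)]$ linearly in $t$. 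This gives directly the $L^1$ bound needed for Kingman and avoids the auxiliary $j\to\infty$ limit. Your scaling argument is also a one-step version of the paper's two-step reduction (first $\lambda\to 1$, then $A\to 1$); both are equivalent. One small imprecision: the ``flat extension $(s,0)\to(t,0)$'' is not literally how $\Delta^{(t,0)}$ is formed (the point $(s,0)$ is not inserted); the monotonicity in $t$ holds instead because the final increment $(t_k,x_k)\to(t,0)$ has smaller H\"older ratio and smaller entropy contribution than $(t_k,x_k)\to(s,0)$ when $t>s$ and $\gamma,b\ge 0$.
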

We refer to Appendix~\ref{app} for a discussion on the value of the constants.

\begin{proof} 
We have already noted that the limit along the integers $n\to\infty$ exists. We can extend the limit along the real line $t\to\infty$, using that $ \cL_{\lambda}^{(\cC)}(\lfloor t \rfloor)\le  \cL_\lambda^{(\cC)}(t) \le \cL_\lambda^{(\cC)}(\lceil t \rceil) $.

We now prove that the constants are finite., as a consequence of our~Theorems~\ref{thm:HLPP}-\ref{thm:ELPP}. 
Let us deal only with the H\"older case, and let us set $A=1$,$\lambda=1$ for simplicity. The proof is standard but we include it for the sake of completeness.
Thanks to \eqref{eq:scalingL} below, we get that $\cL_{1}(t)$ had the same distribution as $ \cL_{t^{1+\gamma}}(1)$, and to prove that the constant  in \eqref{subadditive} is finite it therefore suffices to show that $\limsup_{\rho\to\infty} \rho^{-1/(1+\gamma)}\cL_{\rho} (1) < +\infty$ a.s. 
First, removing the  point-to-point constraint gives that
$\cL_{\rho} (1)\le \cL_{\gU_\rho}^{(\cH_\gamma^A)}(\Lambda_{1,\infty})$, where the latter is the H$_{\gamma}$-LPP in the domain $\Lambda_{1,\infty} = [0,1]\times \bbR$ with a set $\gU_{\rho}$ which is a Poisson point process of intensity $\rho$, see Section~\ref{sec:HLPP}. 
We cannot directly apply Theorem~\ref{thm:HLPP} because $\Lambda_{1,\infty}$ is not bounded and $\gU_\rho$ does not have a fixed number of points. However, we can write $\cL_{\gU_\rho}^{(\cH_\gamma^A)}(\Lambda_{1,\infty}) = \lim_{j\to \infty} \cL_{\gU_\rho}^{(\cH_\gamma^A)}(\Lambda_{1,j})$ with $\Lambda_{1,j}=[0,1]\times [-j,j]$, 
so that for any $v>0$
\[
\bbP\big( \cL_{\gU_\rho}^{(\cH_\gamma^A)}(\Lambda_{1,\infty}) \ge v \rho^{1/(1+\gamma)} \big) = 
\lim_{j\to\infty} \bbP \big(\cL_{\gU_\rho}^{(\cH_\gamma^A)}(\Lambda_{1,j}) \ge v \rho^{1/(1+\gamma)} \big)\, .
\]
Then, we denote $N_j^{(\rho)}:=|\gU_{\rho} \cap \Lambda_{1,j}|$ the number of Poisson points in $\Lambda_{1,j}$: using Theorem~\ref{thm:HLPP} (with $m=4 \rho j$), we can write
\begin{align}
\label{eq:depoisson}
\bbP \big(\cL_{\gU_\rho}^{(\cH_\gamma^A)}(\Lambda_{1,j}) \ge v \rho^{1/(1+\gamma)} \big)  & \le \bbP\big( N_j^{(\rho)} \ge 4\rho j \big) + \bbP\big( \cL_{4\rho j} (\Lambda_{1,j}) \ge v \rho^{1/(1+\gamma)} \big) \\
& \le  \bbP\big( N_j^{(\rho)} \ge 4\rho j \big) +  \Big(\frac{ 4 c_1}{ v^{1+\gamma}} \Big)^{ v\rho^{1/(1+\gamma)}  } \, . \notag
\end{align}
The first probability goes to $0$ as $j\to \infty$ ($N_j^{(\rho)}$ is a Poisson r.v.\ of parameter $2\rho j$), so that choosing $v_0=(8c_1)^{1/(1+\gamma)}$, we obtain that 
\[\bbP\big( \cL_{\gU_\rho}^{(\cH_\gamma^A)}(\Lambda_{1,\infty}) \ge v_0 \rho^{1/(1+\gamma)} \big) \le 2^{ - v_0 \rho^{1/(1+\gamma)}} \, ,\]
which concludes the argument.

To show the scaling relation \eqref{eq:scalingC}, we consider two different scaling relations satisfied by $\cL_{\lambda}^{(\cH_{\gamma}^{A})}$ and $ \cL_{\lambda}^{(\cE_{a,b}^{B})}$.
For this purpose, we start by considering the following maps: 

(i) $(t,x)\mapsto ( \lambda^{1/(1+\gamma)}t, \lambda^{\gamma/(1+\gamma)}x)$, which does not change the $\gamma$-H\"older  norm of a set $\Delta$;

(ii) $(t,x)\mapsto ( \lambda^{ a/(a+b+1)}t, \lambda^{(b+1)/(a+b+1)} x)$, which multiplies the entropy of a set $\Delta$ (and~$t$) by $\lambda^{a/(a+b+1)}$.

\noindent 
Therefore, since the image of $\gU_{\lambda}$ through these maps has the distribution of $\gU_{1}$, we obtain the following identities in distribution
\begin{equation}
\label{eq:scalingL}
\cL_{\lambda}^{(\cH_{\gamma}^{A})}(t) \stackrel{(d)}{=} \cL_{1}^{(\cH_{\gamma}^{A})} \big( \lambda^{1/(1+\gamma)} t\big) \quad 
\text{ and } \quad \cL_{\lambda}^{(\cE_{a,b}^{B})}(t,tu) \stackrel{(d)}{=} \cL_{1}^{(\cE_{a,b}^{B }
)} \big( \lambda^{a/(a+b+1)} t\big) \, .
\end{equation}
As a consequence, by using \eqref{subadditive}, we also get the existence of the following limits, for any fixed $t>0$, $A,B>0$
\begin{equation}
\label{convergence}
\lim_{\lambda\to\infty}  \frac{1}{\lambda^{1/(1+\gamma)}} \cL_{\lambda}^{(\cH_{\gamma}^{A})} ( t  ) = t \mathtt{C}_{1,A}^{\rm H}  ; \qquad 
  \lim_{\lambda\to\infty}  \frac{1}{\lambda^{a/(a+b+1)}} \cL_{\lambda}^{(\cE_{a,b}^{B})} ( t ) = t  \mathtt{C}_{1,B}^{\rm E} \, .
\end{equation}
Note that we recover the same order for $\cL_\lambda$ as in Theorems~\ref{thm:HLPP}-\ref{thm:ELPP}. 
%Note also that the end-point has to be scaled with $\lambda$, except when $\gamma=1$ or $a=b+1$. 
%Let us now discuss the dependence of the constants in Proposition \ref{prop:subadditive} on the different parameters.
From \eqref{convergence} we directly obtain that 
\begin{equation}\label{eq:CHE1}
\mathtt{C}_{\lambda, A}^{\rm H}=
\lambda^{1/(1+\gamma) }
\mathtt{C}_{1, A}^{\rm H}\, , \qquad
\text{and}\qquad \mathtt{C}_{\lambda, B}^{\rm E}=\lambda^{a/(a+b+1)}\mathtt{C}_{1, B}^{\rm E} \, .
\end{equation}

%\smallskip
Applying another scaling, we can also reduce to the case where $A=1$, $B=1$. We consider the following maps, that preserves the distribution of $\gU_\lambda$:

(i) $(t,x)\mapsto ( A^{1/(1+\gamma)}t, A^{-1/(1+\gamma)}x)$, which divides  the $\gamma$-H\"older  norm  by $A$;

 (ii) $(t,x)\mapsto ( B^{ 1/(a+b+1)}t, B^{-1/(a+b+1)} x)$, which multiplies the entropy by $B^{-1} \times B^{1/(a+b+1)}$. 
 
\noindent
Then, we obtain that
\begin{align*}
\cL_{\lambda}^{(\cH_{\gamma}^{A})}(t) \stackrel{(d)}{=} \cL_{\lambda}^{(\cH_{\gamma}^{1})} \big(  A^{1/(1+\gamma)}t \big)
\qquad
\text{ and } \qquad  \cL_{\lambda}^{(\cE_{a,b}^{B})}(t) \stackrel{(d)}{=} \cL_{\lambda}^{(\cE_{a,b}^{1})} \big( B^{1/(a+b+1)} t\big) \, .
\end{align*}
As a consequence, we have that 
\begin{equation}\label{eq:CHE2}
\mathtt{C}_{1,A}^{\rm H} = A^{1/(1+\gamma)}\mathtt{C}_{1,1}^{\rm H} , \qquad \text{and} \qquad  \mathtt{C}_{1,B}^{\rm E} = B^{1/(a+b+1)} \mathtt{C}_{1,1}^{\rm E}.
\end{equation} 
In the end, \eqref{eq:CHE1}, together with \eqref{eq:CHE2}, give \eqref{eq:scalingC}.
%\begin{equation}
%\label{eq:scalingC}
%\mathtt{C}_{\lambda, A}^{\rm H}(u)=  
%(\lambda A)^{\frac{1}{1+\gamma}}\, \mathtt{C}^{\rm H}_{1,1} \big( u \lambda^{\frac{1-\gamma}{1+\gamma}} A^{-\frac{2}{1+\gamma}}\big) \ ; \  \mathtt{C}_{\lambda, B}^{\rm E}(u)= (\lambda B^{1/a})^{\frac{a}{a+b+1}}\,  \mathtt{C}^{\rm E}_{1,1} \big( u \lambda^{\frac{a-b+1}{a+b+1}} B^{-\frac{2}{a+b+1}}\big).
%\end{equation}
\end{proof}

\begin{remark}\rm
\label{rem:converge}
When considering  $t=1$ with $\lambda=m$, this corresponds to the LPP problem in $\Lambda_{1,\infty}$ with $\gU$ a Poisson point process of intensity $m$. In principle, one could use \eqref{convergence} (with $\lambda=m$), together with a de-Poissonization argument (cf.~\cite{Ha72}), in order to prove the convergence for the point-to-point version of the H$_{\gamma}$-LPP and E-LPP of Sections~\ref{sec:HLPP}-\ref{sec:ELPP} to the constants in \eqref{subadditive}. 
However, the argument should fail (and the constants differ) when the transversal fluctuations of the optimal path become too large: indeed, restricting the paths to stay in a box $[0,1]\times [-1,1]$ is then an important constraint. 
\end{remark}

\section{Non-directed LPP}
\label{sec:nondir}
The notion of compatible-LPP allows for even more general constraints, and for example enables us to deal with non-directed cases.
Let us consider a natural framework, as an example: we work with a time horizon $[0,t]$, and define the Entropy of a subset $\Delta = (x_i)_{1\le i\le k}$ of $\bbR^2$ (the points are considered in a given order), by considering the optimal Entropy of a path going through the points of $\Delta$ (in the correct order) in a time horizon~$t$: 
\begin{align}
\mathbf{Ent}_{a,b}(t,\Delta) &:= \inf \bigg\{ \sum_{i=1}^k  \frac{\|x_{i}-x_{i-1}\|^a}{|t_i-t_{i-1}|^b} \, ; \, t_1<\cdots<t_k\ \text{ subdivision of } [0,t] \bigg \} \, ,
\label{nondir-Ent}
\end{align}
where  $\|\cdot\|$ denotes the Euclidean norm on $\bbR^2$. Another way of presenting it is by saying that  $\mathbf{Ent}_{a,b}(t,\Delta)$ is smaller than $A$ if and only if there exists a path $s:[0,t]\to\bbR^2$ collecting the points of $\Delta$ which has entropy smaller than $A$.
We notice right away that we are able to identify the optimal subdivision $0\le t_1<\cdots <t_k \le t$ used by a path to collect all points of $\Delta$:
the optimal choice is $t_i-t_{i-1} = t \| x_{i}- x_{i-1} \|^{a/(b+1)}  \big(\sum_{i=1}^k \| x_{i}- x_{i-1} \|^{a/(b+1)}  \big)^{-1}$. Then we obtain that the Entropy of $\Delta$ is
\begin{equation}
\label{def:Entnondir}
\mathbf{Ent}_{a,b}(t,\Delta) = \frac{1 }{t^{b}}  \Big( \sum_{i=1}^k \| x_{i}- x_{i-1} \|^{a/(b+1)}  \Big)^{b+1}\, .
\end{equation}

Here again, the case $b>0$ with $a=b+1$ will be of particular interest for us, since it arises naturally from a LDP for non-directed random walks to visit a certain set of points.
%(\textit{i.e.}\ considering the probability that there are some times $t_1<\cdots <t_k$ such that $S_{t_i}=x_i$). 
In the case $a=b+1$, the optimal choice for the subdivision is to take $t_i-t_{i-1}$ proportional to the distance between the points, and \eqref{def:Entnondir} corresponds to the $(b+1)$-th power of the length of the linear interpolation of the points of $\Delta$---it can then easily be extended to continuous curves  $s:[0,t]\to \bbR^2$.
The case $b=0$ also arises when considering random walks with increments with a stretch-exponential tail, and correspond to the $a$-variation norm of a curve $s:[0,t]\to \bbR$.

%
%\begin{remark}\rm
%\label{rem:Hol/Ent}
%In view of \eqref{def:Holnondir}-\eqref{def:Entnondir} (and the comments below), we see that the H$_{\gamma}$-LPP and the E-LPP are equivalent. We indeed have  $\mathbf{Ent}_{a,b}(t,\Delta) = t \mathbf{H}_{\gamma}(t,\Delta)^a $ with $\gamma =(b+1)/a$, or also $\mathbf{H}_{\gamma}(t,\Delta) = t^{-\gamma} \mathbf{Ent}_{a,b}(t,\Delta)^{\gamma}$ with $b=0$ and $a=1/\gamma$. Hence, we will focus simply on the non-directed E-LPP, since the Entropy and H\"older constraints are easily related to each other.
%% $\ent_{a,b}^{(t)}(\Delta) \le B $ is equivalent to $\mathrm{H}_{\gamma}^{(t)}(\Delta) \le (B/t)^{1/a}$ (with $\gamma=(b+1)/a$).
%\end{remark}

We will work with the domain $\Lambda_r= \{x\in \bbR^2, \|x\|\le r\}$, for $r>0$ (this choice is not crucial). For $m\ge 1$, we let $\gU_m$ be a set of $m$ independent variables uniform in $\Lambda_r$.
For some fixed $B>0$, the non-directed \emph{Entropy compatible} sets with time horizon $[0,t]$ is defined by
\[\sE_{a,b}^{t,B} = \big \{\Delta \subset \bbR^2  \, ; \, \mathbf{Ent}_{a,b}(t,\Delta) \le B \big \}\, ,\]
and finally the  \emph{non-directed} LPP is
% (we do not specify if it is E-LPP or H$_\gamma$-LPP since they are comparable in the non-directed case),
\begin{equation}
\label{de:nondirectedLPP}
\sL_m^{(\sE_{a,b}^{t,B})} (\Lambda_r) := \sup \Big\{  |\Delta| \, ; \, \Delta \subset \gU_m , \Delta \in  \sE_{a,b}^{B}(t) \Big\}. 
\end{equation}
(We use a curly font for  $\sL$ and $\sE$ to visually mark the difference with the directed LPPs.)
We prove the following result, for non-directed LPP.
\begin{theorem}
\label{thm:nondir}
There exist constants $c_5,c_6$ such that for any $t,r$ and $B$, for any $1\le k\le m$  
\begin{align}
\label{nondirE-upper}
\bbP\Big( \sL_m^{(\sE_{a,b}^{t,B})} (\Lambda_r) \ge k\Big)  &\le \Big(  \frac{ c_5 (B t^{b}/r^a)^{2/a} m }{ k^{2(b+1)/a}} \Big)^k \, , \\
\bbP\Big( \sL_m^{(\sE_{a,b}^{t,B})} (\Lambda_r) \le k\Big)  &\le  e^{-c_6 m} + \exp\bigg \{ c_6 k \Big( 1 -  c_{6}\frac{m^{a/2(b+1)}}{k} \big( B t^b /r^a \big)^{1/(b+1)}   \Big)  \bigg\}\, .
\label{nondirE-lower}
\end{align}
Finally, there is some $C>0$ such that
$\bbP$-a.s.\ there is some $m_0$ such that 
\begin{equation}\label{37nondir}   \frac{1}{C} \le  \frac{ \sL_m^{(\sE_{a,b}^{t,B})} (\Lambda_r) }{ m\wedge (B t^{b}/r^a)^{\frac{1}{b+1}}  m^{\frac{a}{2(b+1)}}} \le C \qquad \text{ for all } m\ge m_0\, .\end{equation}
\end{theorem}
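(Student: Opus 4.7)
The plan is to derive the two tail bounds \eqref{nondirE-upper} and \eqref{nondirE-lower} by, respectively, a first-moment union bound and an explicit raster-path construction, and then deduce the a.s.\ asymptotics \eqref{37nondir} by Borel--Cantelli. Throughout I set $\alpha := a/(b+1)$ and $L := (B t^{b})^{1/(b+1)}$, so that by \eqref{def:Entnondir} an ordered tuple $\Delta=(x_1,\ldots,x_k)$ (with the convention $x_0=0$) is $\sE_{a,b}^{t,B}$-compatible iff $\sum_{i=1}^{k}\|x_i-x_{i-1}\|^{\alpha}\le L$.

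For \eqref{nondirE-upper} I bound $\bbP(\sL_m^{(\sE_{a,b}^{t,B})}(\Lambda_r)\ge k)$ by $m(m-1)\cdots(m-k+1)\le m^k$ (a union bound over all ordered $k$-tuples of distinct points of $\gU_m$) times the probability that one fixed ordered $k$-tuple of i.i.d.\ uniform points of $\Lambda_r$ is compatible. A change of variables to the increments $y_i:=x_i-x_{i-1}$ (with Jacobian $1$) and dropping the constraint $x_i\in\Lambda_r$ bounds this probability by $(\pi r^2)^{-k}$ times the volume in $(\bbR^2)^k$ of the ``$\ell^{\alpha}$-ball'' $\{(y_1,\ldots,y_k):\sum\|y_i\|^{\alpha}\le L\}$. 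Passing to polar coordinates in each $y_i$ and evaluating a Dirichlet integral, this volume equals $L^{2k/\alpha}(2\pi/\alpha)^{k}\Gamma(2/\alpha)^{k}/\Gamma(2k/\alpha+1)$, and Stirling bounds the denominator from below by $c\,(2k/e\alpha)^{2k/\alpha}$. Assembling the pieces and identifying $L^{2/\alpha}/r^{2}=(Bt^{b}/r^{a})^{2/a}$ and $k^{2/\alpha}=k^{2(b+1)/a}$ yields \eqref{nondirE-upper}.

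For \eqref{nondirE-lower} I construct an explicit compatible subset by a lawnmower / raster-path argument. I partition a portion of $\Lambda_r$ into $N$ disjoint square cells of side $\ell\asymp r/\sqrt{m}$, so that each cell has area of order $r^{2}/m$ and contains a point with probability bounded below by a positive constant, and order the cells along a Hamiltonian raster path so that consecutive cells are at Euclidean distance $\le 2\ell$. Picking one point in every non-empty cell along the path produces a set $\Delta$ whose $\ell^{\alpha}$-cost is at most $N(2\ell)^{\alpha}\asymp Nr^{\alpha}m^{-\alpha/2}$; requiring this to be at most $L$ forces the block size to satisfy $N\lesssim (Bt^{b}/r^{a})^{1/(b+1)}m^{a/(2(b+1))}$, exactly the threshold appearing in the second term of \eqref{nondirE-lower}. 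Taking $N$ of that order and letting $K$ be the number of non-empty cells, a Chernoff--Bernstein bound on $K$ (using negative association of the indicators $\mathbf{1}_{\{\gU_m\cap C_j\ne\emptyset\}}$ under the fixed-$m$ uniform law, or equivalently de-Poissonisation through an independent Poisson point process of intensity $\sim m/r^{2}$) produces the second term of \eqref{nondirE-lower}. The first term $e^{-c_6 m}$ appears in the complementary regime $k\asymp m$: one lawnmows the whole disk with $N\asymp m$ cells and only needs a positive fraction to be non-empty, an event whose complement has probability $\le e^{-c_6 m}$ by the same concentration.

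The a.s.\ statement \eqref{37nondir} then follows from Borel--Cantelli. With $k^{\star}:=m\wedge(Bt^{b}/r^{a})^{1/(b+1)}m^{a/(2(b+1))}$, plugging $k=Ck^{\star}$ for $C$ large into \eqref{nondirE-upper} makes $\bbP(\sL_m\ge Ck^{\star})$ summable in $m$, while $k=k^{\star}/C$ in \eqref{nondirE-lower} makes both the $e^{-c_6 m}$ term and the exponential-in-$k^{\star}$ term summable (since $k^{\star}\to\infty$ at least polynomially in $m$). I expect the main technical difficulty to lie in the concentration step of the lower bound: the cell-occupancy indicators are not independent under the fixed-$m$ uniform measure, so the Chernoff argument has to proceed through negative association for balls-in-boxes or through the Poissonisation detour indicated above.
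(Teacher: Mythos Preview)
Your upper bound is essentially the paper's: a union bound over ordered $k$-tuples, the change to increments, and the Dirichlet/Stirling evaluation of the $\ell^{\alpha}$-ball volume all match the paper's computation of $\mathrm{Vol}(\tilde{\mathrm E}_k(D))$.

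The lower bound, however, has a genuine gap. Your claim that ``picking one point in every non-empty cell along the path produces a set $\Delta$ whose $\ell^{\alpha}$-cost is at most $N(2\ell)^{\alpha}$'' is only correct when $\alpha=a/(b+1)\le 1$. In that regime, subadditivity of $u\mapsto u^{\alpha}$ gives $\|p_j-p_{j-1}\|^{\alpha}\le \big((I_j-I_{j-1})\,2\ell\big)^{\alpha}\le (I_j-I_{j-1})(2\ell)^{\alpha}$, and summing the gaps recovers the bound by $N(2\ell)^{\alpha}$. But for $\alpha>1$ (e.g.\ the quadratic-variation case $a=2,b=0$, which the paper singles out), skipping an empty cell \emph{increases} the $\ell^{\alpha}$-cost: a single gap of size $g$ contributes $g^{\alpha}(2\ell)^{\alpha}$, and $\sum_j (I_j-I_{j-1})^{\alpha}$ is not controlled by $N$. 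So the entropy constraint is \emph{not} deterministically satisfied by your construction, and the problem does not reduce to concentration of the occupancy count $K$ alone.

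The paper's lower bound is built differently precisely to handle this. It tiles with $m/4$ cells (independently of $k$), lets $I_1<I_2<\cdots$ be the indices of the non-empty cells, and bounds the $\ell^{\alpha}$-cost of the first $k$ picked points by $c\,\ell^{\alpha}\sum_{j\le k}(I_j-I_{j-1})^{\alpha}$. The heart of the argument is then to show that, after conditioning on a bounded total count in the first $k$ non-empty cells, the law of the gaps $(I_j-I_{j-1})_{j\le k}$ is dominated (up to a factor $e^{ck}$) by that of i.i.d.\ geometric variables, and hence $\bbP\big(\sum_{j\le k}(I_j-I_{j-1})^{\alpha}>kV\big)$ is exponentially small in $kV$. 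This is where the real dependency work sits --- not in the concentration of $K$, which you flagged, but in controlling the random entropy through the gap structure. Your negative-association/de-Poissonisation route would handle the count $K$, but you still need an argument of this type for $\sum_j(I_j-I_{j-1})^{\alpha}$ when $\alpha>1$.
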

We have that $\sL_m$ is of order $m^{\kappa}$ with $\kappa = \frac{a}{2(b+1)} \wedge 1$: we also expect that $\sL_m/m^{\kappa}$ converges a.s.\ to a constant as $m\to\infty$. 
Let us highlight the fact that we find
 $\kappa=1/2$ (as for the standard LPP) in the case $a=b+1$.

\begin{remark}\rm
We stress that we could have defined a corresponding non-directed $\gamma$-H\"older-LPP, the analogous of \eqref{def:Entnondir} ending up being
\[\mathbf{H}_{\gamma}(t,\Delta) = \frac{1 }{t^{\gamma}}\Big( \sum_{i=1}^k \| x_{i}- x_{i-1} \|^{1/\gamma} \Big)^{\gamma} \, .\]
It is very similar to \eqref{def:Entnondir}, with $\gamma = (b+1)/a$ (possibly changing the constants $t,B$): one finds that the corresponding non-directed LPP $\sL_m$ is of order $m^{\kappa}$ with $\kappa = \frac{1}{2\gamma} \wedge 1$ (hence $\kappa=1/2$ for a Lipschitz constraint, as in the original Hammersley LPP).
\end{remark}

\section{Some applications of the  (entropy) constrained LPP}
\label{sec:applications}

Our main goal has been to introduce a generalized LPP, and the results we stated give the first properties of such models, which are already useful in some contexts---it has proven useful in~\cite{cf:BT_HT}.
We now present briefly two applications of the directed and non-directed LPPs, to the context of polymer models.

\subsection{Application I: a model for a directed polymer in Poissonian environment}
\label{sec:appli1}

We define here a very natural variational problem, which encapsulate the energy-entropy competition inherent to models of polymers in random environment.
The random environment is given by a Poisson point process $\gU_{\lambda}$ on $\bbR_+\times \bbR$ of intensity $\lambda>0$ (its law is denoted $\bbP$), and for $\gb>0$, we define the following (point to point) variational problem
\begin{equation}
\label{def:polym1}
\mathcal{Z}_{\lambda,\beta}(t) := \sup_{s:[0,t]\to \bbR, s(0)=s(t)=0 } \Big\{ \beta \, \big| s\cap \gU_{\lambda} \big| - \ent(s)  \Big\} \, ,  
\end{equation}
with $\ent(s)$ defined as in \eqref{def:ent}---because $\gU_{\lambda}$ is countable, $\ent(s)$ is well-defined.
This variational problem constitute a simplified model to study the energy-entropy competition of polymer models,  $| s\cap \gU_{\lambda}|$ being viewed as a measure of the energy of a trajectory $s$
Again, the central cases that we have in mind is when $a=b+1$ or $b=0$ in the definition of the entropy \eqref{def:ent}, see Remark~\ref{rem:pathentropy} (when the entropy derives from the LDP of a simple random walk, we have $a=2, b=1$).
The idea of this model is similar to that of \cite{CY13} which considers a Brownian polymer in Poissonian medium. However, here, we only consider trajectories maximizing the energy-entropy balance, and we also allow for more general entropy than that of the Brownian motion (for which $a=2,b=1$).

{ 
Let us stress that the variational problem \eqref{def:polym1} has already appeared in \cite{BCK14} (in the case $a=2,b=1$) as a solution for a Hamilton-Jacobi equation used to study the stationary solutions of a Burgers equation (with a forcing induced by the points of a PPP). It has proven to be useful for the study of the thermodynamic limit for directed polymers, cf.~\cite{BL16}. 
}

First of all, we notice that $\mathcal{Z}_{\lambda,\gb}(t)$ is a super-additive ergodic sequence, so that Kingman's sub-additive  ergodic theorem gives that the limit
\begin{equation}
f(\lambda,\gb) := \lim_{t\to\infty} \frac{1}{t} \mathcal{Z}_{\lambda,\gb}(t)
\end{equation}
exists a.s.\ and in $L^1$, and is  $\bbP$-a.s.\ constant.
The fact that  $f(\lambda,\gb)$ is finite derives from our estimates in Theorem~\ref{thm:ELPP} (together with the argument in Section~\ref{sec:poisson}, see \eqref{eq:depoisson}), so we skip it---we mention that this fact was an important part of the study in \cite{BCK14}. %(see its Section~4). 

%One can also show the existence of such limit, without restrictions on $t$
We also have scaling relations for $\mathcal{Z}_{\lambda,\gb}(t)$. Indeed, consider the two following maps: (i) $(t,x) \mapsto (\lambda^{-a/(a+b)} t, \lambda^{-b/(a+b)} x)$ whose image of $\gU_{\lambda}$ has distribution $\gU_{1}$ and which preserves the entropy; (ii) $(t,x) \mapsto ( \gb^{-1/(a+b)} t, \gb^{1/(a+b)}x)$, which multiplies the entropy by $\gb$, while preserving the distribution of $\gU_{\lambda}$. We therefore obtain that
\begin{equation}
\label{scalingZ}
\mathcal{Z}_{\lambda,\gb}(t) \stackrel{(d)}{=} \mathcal{Z}_{1,\gb} \big( \lambda^{-a/(a+b)}  t \big) \qquad \text{ and } \qquad \mathcal{Z}_{\lambda,\gb}(t) \stackrel{(d)}{=}  \gb \mathcal{Z}_{\lambda,1} \big( \gb^{-1/(a+b)} t \big) \, .
\end{equation}
A first consequence is that we get that $f(\lambda,\gb) = (\gb^{a+b+1}\lambda^{a})^{1/(a+b)} f(1,1)$, where $f(1,1)$ is a constant that needs to be determined.
Another consequence is that, if we consider the alternative problem where  we take $\lambda \to\infty$ (instead of $t\to\infty$), we get that, for any fixed positive $t,\gb$, the limit
\begin{equation}
\label{convZ}
 \lim_{\lambda\to +\infty}\frac{1}{\lambda^{a/(a+b)}} \mathcal{Z}_{\lambda,\gb}(t)  = t f(1,\gb) = t \gb^{(a+b+1)/(a+b)} f(1,1) 
\end{equation}
exists a.s.\ and in $L^1$.

For this model, some important questions  remain unanswered:
(i) what is the value of the constant $f(1,1)$?
% In view of \eqref{decomposeZm}, and since we believe that $\sup_{\ent(s) \le B}\{|s\cap \gU_m| \} \sim B^{1/(a+b+1)} \mathtt C m^{\kappa}$ with $\kappa=1/(a+b+1)$ 
%and $\mathtt C$ (we used the scaling of relation \eqref{eq:scalingC} that should hold also in the non-Poissonian case, that is, 
% $\mathtt{C}_B=B^{1/(a+b+1)}\mathtt{C}$), we conjecture that the supremum in \eqref{decomposeZm} is equal to 
%\[\mathbf{c}_{a,b} (\gb \mathtt C m^{\kappa} )^{(a+b+1)/(a+b)}, \quad \text{where}\quad \mathbf{c}_{a,b} = (a+b) (a+b+1)^{- (a+b+1)/(a+b)}\]
% (the supremum is attained for $B=(\gb \mathtt C m^{\kappa} /(a+b+1))^{(a+b+1)/(a+b)}$). Since the simulations of Appendix~\ref{app} suggest that the constant $\mathtt C$ is equal to $1$, we can therefore conjecture that 
%\[ \lim_{m\to\infty} \frac{Z_{m,\gb}}{ (\gb^{a+b+1} m^a )^{1/(a+b)}}  = \mathbf{c}_{a,b} %= (a+b) (a+b+1)^{- (a+b+1)/(a+b)}
%\, .\]
(ii) what does the maximizer of $\mathcal{Z}_{\lambda,\gb}(t)$ look like? For instance, what is its typical transversal fluctuation exponent? We mention that in \cite{BCK14,BL16}, the results are mostly qualitative, such as the existence and coalescence of semi-infinite maximizers for this model.

\subsection{Application II: non-directed polymers in heavy-tail environment}
\label{sec:appli}

The directed E-LPP  have already proved to be useful to understand the transversal fluctuations and scaling limits of directed polymers in heavy-tail random environment, see~\cite{cf:BT_HT}. The continuous limit of the model is found to be an energy-entropy variational problem, and E-LPP appears central to ascertain its well-posedness.
Here, we define an analogous variational problem in the non-directed setting, and show that it is well defined. It should also appear as the scaling limit of some non-directed polymer model in heavy-tail random environment.

As a continuum disorder field, we let $\cP:=\{(w_i,x_i,y_i)\colon i\geq 1\}$ be a PPP on  $[0,\infty)\times \mathbb R^2 $,
of intensity $\mu(\dd w \dd x \dd y)=\frac{\alpha}{2} w^{-\alpha-1}\ind_{\{w>0\}}\dd w\dd x \dd y$ ---it derives from the scaling of a discrete field of disorder with heavy-tail distribution, with tail exponent $\alpha$.
For a continuous path $s:[0,1]\to \bbR^2$, we can then define its  energy by summing the weights in $\cP$ ``collected'' by $s$,
$\pi(s) = \sum_{(x_i,y_i)\in s} w_i$. We can also define its length $\len(s) = \int_0^1 \| s'(u)\| \dd u$, and  we consider $\ell(s)^{\nu}$ for some $\nu > 1$ as a measure of its entropy. Indeed,  if $s$ is a linear interpolation of a 
finite number of points in $\cP$, then $\ell(s)^\nu$ is nothing but the non-directed entropy defined in \eqref{def:Entnondir} with $a=b+1$ and $b+1=\nu$.
This choice derives from LDP for a random walk, and $\nu=2$ corresponds to the moderate deviation regime of the simple random walk.

Thanks to the non-directed LPP of Section \ref{sec:nondir}, we are able to show that the energy/entropy variational problem is well defined, when $\ga \in (2/\nu,2)$.
\begin{proposition}
\label{prop:polym}
For any $\nu>1$, the following variational problem is well defined for all $\gb\ge 0$, when $\ga\in (2/\nu,2)$,
\begin{equation}
\label{eq:Tgb}
\cT_{\beta}^{(\nu)} := \suptwo{ s:[0,1]\to \bbR^2 }{s(0)=0, \, \len(s) <\infty}  \big\{  \beta \pi(s) - \len(s)^{\nu} \big\} \, . 
\end{equation}
For $\gb>0$, we have that $\cT_{\beta}^{(\nu)}>0$ a.s.\ and  $\bbE[ ( \cT_{\gb}^{(\nu)})^{\kappa}] <\infty$ for any $\kappa <\ga-2/\nu$. 
Moreover, for any $\ga\in ( 2/\nu  ,2)$, we have the scaling relation
$
 \cT_{\gb}^{(\nu)} \stackrel{(\dd)}{=} \gb^{\tfrac{\nu\ga}{\nu\ga -2}}\,  \cT_{1}^{(\nu)} .
$
On the other hand, if $\ga\in (0,2/\nu]$, we have that $\cT_\beta^{(\nu)}=+\infty$ a.s.
\end{proposition}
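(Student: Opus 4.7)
My plan is to handle the four assertions in turn, with the finiteness and moment bound for $\ga \in (2/\nu, 2)$ being where Theorem~\ref{thm:nondir} plays the central role.

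\emph{Scaling and positivity.} The map $(w, z) \mapsto (\lambda w, \lambda^{\ga/2} z)$ preserves the Poisson intensity $\tfrac{\ga}{2} w^{-\ga-1}\ind_{\{w>0\}}\dd w \dd z$ of $\cP$; it scales $\pi(s)$ by $\lambda$ and $\len(s)$ by $\lambda^{\ga/2}$. Substituting into $\cT_\gb^{(\nu)}$ gives $\cT_\gb^{(\nu)} \stackrel{(d)}{=} \lambda^{-\nu\ga/2}\, \cT_{\gb \lambda^{\nu\ga/2-1}}^{(\nu)}$, and the announced scaling follows by choosing $\lambda$ so that $\gb \lambda^{\nu\ga/2-1} = 1$. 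For positivity, taking $s$ to be the straight segment from $0$ to a point $z_i \in \cP$ gives $\cT_\gb^{(\nu)} \geq (\gb w_i - \|z_i\|^\nu)^+$, and a direct computation shows that the Poisson intensity of $\{(w,z) : \gb w > \|z\|^\nu\}$ is always infinite (the radial integral $\int_0^\infty r^{1-\nu\ga}\dd r$ diverges at $0$ or at $\infty$), so such profitable points exist a.s.

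\emph{Divergence case $\ga \leq 2/\nu$.} Using the same single-point lower bound, a polar-coordinates computation together with the substitution $\|z\|^\nu = tv$ shows that the Poisson intensity of $\{(w,z) : \gb w - \|z\|^\nu > t\}$ equals a constant times $\gb^\ga\, t^{2/\nu - \ga} \int_0^\infty v^{2/\nu - 1}(1+v)^{-\ga} \dd v$. This last integral diverges at infinity precisely when $\ga \leq 2/\nu$, in which case for every $t$ there are almost surely infinitely many points with $\gb w_i - \|z_i\|^\nu > t$, hence $\cT_\gb^{(\nu)} = +\infty$.

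\emph{Finiteness when $\ga \in (2/\nu, 2)$.} The core step is to bound, for each $L > 0$, the quantity $M_L := \sup\{\pi(s) \colon s(0) = 0,\, \len(s) \leq L\}$. Any such $s$ lies in $B(0, L)$, so I decompose $\cP$ dyadically in the weight: let $\cP_k := \cP \cap \{w \in [2^k, 2^{k+1})\}$ and $\Lambda_k^L := |\cP_k \cap B(0,L)|$, which is Poisson with mean of order $L^2 2^{-k\ga}$. For $a=\nu$, $b=\nu-1$, time horizon $1$ and $B = L^\nu$ (so that $(Bt^b/r^a)^{2/a} = 1$ with $r = L$), the entropy constraint in Theorem~\ref{thm:nondir} reads exactly $\len(s) \leq L$. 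Conditionally on $\Lambda_k^L$, the upper bound \eqref{nondirE-upper} then shows that at most $C \sqrt{\Lambda_k^L}$ points of $\cP_k$ can lie on such a path, outside an event whose probability decays stretched-exponentially in $\sqrt{\Lambda_k^L}$. Combined with Poisson concentration of $\Lambda_k^L$ and with the fact that $\cP_k \cap B(0,L)$ is empty with high probability for $k$ above $k_{\max} := (2/\ga)\log_2 L$, this yields
\[M_L \;\leq\; C \sum_{k \leq k_{\max}} 2^{k+1} \cdot L\, 2^{-k\ga/2} \;\leq\; C' L \cdot 2^{k_{\max}(1 - \ga/2)} \;=\; C' L^{2/\ga},\]
the geometric sum being dominated at $k = k_{\max}$ because $\ga < 2$. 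A union bound over dyadic $L = 2^j$ produces $\cT_\gb^{(\nu)} \leq \sup_L\{\gb C' L^{2/\ga} - L^\nu\}$ a.s., which is finite and of order $\gb^{\nu\ga/(\nu\ga-2)}$, consistent with the scaling relation.

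\emph{Moment bound.} The tail of $\cT_\gb^{(\nu)}$ is governed by the same single-point effect that makes the intensity diverge at criticality: for $\ga > 2/\nu$, the intensity of $\{(w,z) : \gb w - \|z\|^\nu > t\}$ equals (a constant times) $\gb^\ga t^{-(\ga - 2/\nu)}$, which already gives the matching lower bound $\bbP(\cT_\gb^{(\nu)} > t) \geq c\, \gb^\ga t^{-(\ga - 2/\nu)}$ via the single-point argument. For the upper bound, I would run the dyadic analysis above while tracking the tail of $M_L$ quantitatively (the dominant rare event is the presence of a single exceptional weight, contributing probability of order $L^2 y^{-\ga}$ to $\{M_L > y\}$); summing over dyadic scales $L = 2^j$ reproduces the matching tail, and integration yields $\bbE[(\cT_\gb^{(\nu)})^\kappa] < \infty$ for every $\kappa < \ga - 2/\nu$. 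The hardest part is precisely this tail estimate: one must combine Theorem~\ref{thm:nondir} with Poisson concentration and sum over two nested dyadic scales (in the path length $L$ and in the weight level $k$) without losing the critical exponent $\ga - 2/\nu$, which is where uniformity of the constants in Theorem~\ref{thm:nondir} is essential.
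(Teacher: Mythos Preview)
Your plan is correct and matches what the paper intends: the paper omits the proof entirely, pointing to \cite[Section~4]{cf:BT_ELPP} and indicating that the non-directed LPP bound (Theorem~\ref{thm:nondir}) replaces the directed one used there. Your dyadic decomposition in the weight levels, combined with the $\sqrt{m}$ bound from \eqref{nondirE-upper} (the case $a=b+1=\nu$, so $\kappa=1/2$), is exactly that scheme, and your scaling, positivity, and divergence computations are all accurate; your outline is in fact more detailed than what the paper itself provides.
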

The proof follows exactly the same scheme as that in \cite[Section 4]{cf:BT_ELPP}, with Proposition~\ref{prop:polym} in place of \cite[Theorem~2.4]{cf:BT_ELPP} so we skip it.
%The main question remaining is then to describe what a maximizer of \eqref{eq:Tgb} look like.

Polymers in random environment have mostly been considered in the directed framework, see \cite{Comets} for a thorough review, or in the semi-directed context of stretched polymers, see \cite{IV12,Z13}, or \cite{IV12_review} for a review.
Proposition~\ref{prop:polym} therefore shows that our generalized LPP can be useful to study non-directed polymers: the variational problem can be thought as an energy/entropy model for a polymer in continuous random environment.
The most natural question is now to consider a (discrete) non-directed polymer model in random environment (the Hamiltonian being the sum of the weights of the sites visited by the random walk), and prove its convergence  to \eqref{eq:Tgb}, in the case of a heavy-tail environment {(as done in \cite{cf:BT_HT})}.

\section{Proofs of the constrained LPP results} 
 
We prove here Theorems~\ref{thm:HLPP}-\ref{thm:ELPP}-\ref{thm:nondir}. The almost sure statements are straightforward applications of the first parts of the theorems (via the Borel-Cantelli lemma), so we skip their proof. 
We write the details only for the E-LPP, the H-LPP results following exactly the same scheme---for the upper bound, the ideas are similar to those developed in \cite[Part~1]{cf:BT_ELPP}, in a special case of the E-LPP. Some more technical details are needed to obtain~\eqref{nondirE-lower}.

\subsection{Entropy-constrained LPP}
\label{sec:Ent}

We prove first \eqref{ELPP-upper}, and then \eqref{ELPP-lower}.

\subsubsection{Upper bound}
Define $E_{k}(t,B)$ the set of $k$ (ordered) elements up to time-horizon $t$ that  have an entropy bounded by $B$:
\[E_{k}(t,B) = \Big\{ (t_i,x_i)_{1\le i\le k} \, ;\,  0<t_1<\cdots<t_k< t  , \ent_{a,b}\big( (t_i,x_i)_{1\le i\le k} \big) \le B\Big\} \, .\]
We are able to compute exactly the volume of $E_k (t,B)$.
\begin{lemma}
\label{lem:VolE}
For any $t>0$ and $B> 0$, we have for any $k\ge 1$
\[\mathrm{Vol} \big( E_k (t,B) \big) =  2^k  \big( \tfrac1a \big)^k    \frac{\Gamma( \tfrac1a )^k }{\Gamma \big( \tfrac{k}{a}+1 \big)} \frac{\Gamma( \tfrac{a+b}{a})^k}{\Gamma\big(k\tfrac{(a+b)}{a} +1 \big)} \times B^{k/a} t^{k (a+b)/a} .\]
In particular, it gives that there exists some constant $C=C_{a,b}$ (during the course of the proof, one finds  that $C_{a,b} \le c (a+b)^{-1/2}$)  such that 
\[\mathrm{Vol}(E_k(t,B))  \leq \Big( \frac{ C B^{1/a} t^{(a+b)/a} }{k^{(a+b+1)/a}}  \Big)^k\, .\]
\end{lemma}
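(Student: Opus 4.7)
\textbf{Proof plan for Lemma \ref{lem:VolE}.} The natural move is to change variables to increments. Set $\tau_i = t_i - t_{i-1}$ and $y_i = x_i - x_{i-1}$ for $1\le i\le k$ (with $t_0 = x_0 = 0$). The Jacobian of this linear map is lower triangular with $1$'s on the diagonal, so it has absolute value $1$. Under this change, the ordering constraint $0<t_1<\cdots<t_k<t$ becomes $\tau_i>0$ and $\sum_{i=1}^k \tau_i < t$; the variables $y_i$ range freely over $\bbR$; and the entropy constraint reads $\sum_{i=1}^k |y_i|^a/\tau_i^b \le B$. Thus
\[
\mathrm{Vol}\bigl(E_k(t,B)\bigr) = \int_{\substack{\tau_i>0 \\ \sum \tau_i < t}} \Bigg( \int_{\sum_i |y_i|^a/\tau_i^b \le B} \dd y_1\cdots \dd y_k \Bigg) \dd\tau_1\cdots \dd\tau_k.
\]

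Next I compute the inner integral by substituting $u_i = |y_i|^a/\tau_i^b$ (and accounting for the sign of $y_i$), which gives $\dd y_i = \tau_i^{b/a} \cdot a^{-1} u_i^{1/a-1}\dd u_i$ with an extra factor $2$ from $y_i \gtrless 0$. The inner integral becomes
\[
2^k a^{-k} \Bigl(\prod_{i=1}^k \tau_i^{b/a}\Bigr) \int_{\substack{u_i\ge 0 \\ \sum u_i \le B}} \prod_{i=1}^k u_i^{1/a-1} \, \dd u_1\cdots \dd u_k,
\]
and the remaining integral is a classical Dirichlet integral, equal to $B^{k/a}\,\Gamma(1/a)^k/\Gamma(k/a+1)$. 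The outer integral over $\tau$ is then $\int_{\tau_i>0,\,\sum\tau_i<t}\prod_i \tau_i^{(a+b)/a-1}\dd \tau = t^{k(a+b)/a}\,\Gamma((a+b)/a)^k/\Gamma(k(a+b)/a+1)$, again by Dirichlet. Multiplying the two pieces yields the exact formula.

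Finally, for the stated upper bound I apply Stirling. Writing $\Gamma(k/a+1)\ge (k/a)^{k/a}e^{-k/a}\sqrt{2\pi k/a}$ and $\Gamma(k(a+b)/a+1)\ge (k(a+b)/a)^{k(a+b)/a}e^{-k(a+b)/a}\sqrt{2\pi k(a+b)/a}$, the product of gamma denominators contributes $k^{-k(a+b+1)/a}$ times a geometric factor $(a/e)^{k/a}(a/(e(a+b)))^{k(a+b)/a}$. Combining with $\Gamma(1/a)^k$ and $\Gamma((a+b)/a)^k$ (uniformly bounded by $c^k$ for some $c=c_{a,b}$) absorbs everything into a constant $C_{a,b}^k$, while the extra square root factor $(2\pi k(a+b)/a)^{-1/2}$ is where the announced $(a+b)^{-1/2}$ comes from. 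The only mildly delicate point is keeping track of how the $a,b$-dependent constants combine, but nothing is conceptually hard: the whole lemma is a Dirichlet-integral computation, and I do not anticipate any genuine obstacle.
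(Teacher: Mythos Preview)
Your derivation of the exact formula is correct and takes a genuinely different route from the paper. The paper argues by induction on $k$: it conditions on the leftmost point $(u,y)$ to obtain the recursion
\[
\mathrm{Vol}\big(E_k(t,B)\big) = 2\int_0^t\!\!\int_0^{(Bu^b)^{1/a}} \mathrm{Vol}\Big(E_{k-1}\big(t-u,\, B-\tfrac{y^a}{u^b}\big)\Big)\,dy\,du,
\]
and then verifies the closed form step by step via Beta integrals. Your change to increment variables $(\tau_i,y_i)$ decouples the constraints and reduces the whole computation to two direct applications of the Dirichlet integral, with no induction needed. This is cleaner and makes the product structure of the answer immediately visible; the paper's recursion, on the other hand, is the template reused later for the H\"older and non-directed volumes.

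One small correction on the Stirling step: the $(a+b)^{-1/2}$ announced in the statement refers to the constant $C_{a,b}$ sitting \emph{inside} the $k$-th power, not to a global prefactor. The factor $(2\pi k(a+b)/a)^{-1/2}$ you point to is outside the power. The in-power $(a+b)^{-1/2}$ comes instead from bounding $\Gamma\big(\tfrac{a+b}{a}\big)$ against $\big(\tfrac{a+b}{ae}\big)^{(a+b)/a}$ using the behaviour of $\Gamma$ for large and small argument (this is the step the paper flags with ``thanks to the asymptotics of $\Gamma(\alpha)$ as $\alpha\to\infty$ and $\alpha\to 0$''). Your overall bound is still correct; only the attribution of that particular dependence needs adjusting.
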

\begin{proof}
Again, using a decomposition over the left-most point in $E_k(t,B)$ at position $(u,y)$ (by symmetry we can assume $y\geq 0$): it leaves $k-1$ points with remaining time horizon $t-u$ and constraint $B- \tfrac{|y|^a}{ u^{b}}$, we obtain the key induction formula below
\begin{equation}
\label{eq:induction}
\mathrm{Vol}\big( E_k(t,B) \big) = 2 \int_{u=0}^t \int_{y=0}^{(B u^b)^{1/a}} \mathrm{Vol }\Big( E_{k-1}(t-u , B - \tfrac{y^a}{ u^{b}} ) \Big) dy du.
\end{equation}
We give the details of the induction for the sake of completeness, but the proof is a straightforward calculation.
First of all, we have for $k=1$
\[\mathrm{Vol}(E_1 (t,B) )  = 2 \int_{u=0}^t \int_{y=0}^{(B u^{b})^{1/a}}  du dy = 2 B^{1/a} \int_0^t u^{b/a} du = 2 B^{1/a} \frac{a}{a+b} t^{(a+b)/a} \, .\]

For $k\geq 2$, by induction, we have
\begin{align*}
\mathrm{Vol}\big( E_k(t,B) \big) = 2^{k-1} \big( \tfrac{1}{a} \big)^{k-1}  & \frac{ \Gamma( \tfrac1a )^{k-1}  }{\Gamma\big( (k-1)/a+1 \big )}  \frac{\Gamma( \tfrac{a+b}{a})^{k-1}}{\Gamma\big((k-1)\tfrac{(a+b)}{a} +1 \big)} \\
&\times \int_{u=0}^t \int_{y=0}^{(B u^b)^{1/a}}   (t-u)^{(k-1)(a+b)/a} \big( B-\tfrac{y^a}{u^b} \big)^{(k-1)/a}  dy du.
\end{align*}

Then, by a change of variable $z= y^a /(Bu^b)$, we get that
\begin{align*}
\int_{y=0}^{(B u^b)^{1/a}} \big( B-\tfrac{y^a}{u^b} \big)^{(k-1)/a}  dy
&= B^{(k-1)/a} \int_0^1 (1-z)^{(k-1)/a} \, \tfrac1a z^{1/a-1} B^{1/a} u^{b/a} dz  \\
& = \tfrac1a  \, A^{k/a} u^{b/a}\,  \frac{\Gamma\big( (k-1)/a+1 \big) \Gamma(1/a)}{\Gamma(k/a)}\, .
\end{align*}

Moreover, we also have, with a change of variable $w=u/t$
\begin{align*}
 \int_{u=0}^t  u^{b/a} (t-u)^{(k-1)(a+b)/a} du &= t^{(k-1)(a+b)/a+ b/a + 1} \int_0^1 w^{b/a} (1-w)^{(k-1)(a+b)/a} dw \\
 &=  t^{k(a+b)/a} \frac{\Gamma\big(b/a+1\big) \Gamma \big( (k-1)(a+b)/a +1 \big)}{ \Gamma\big( k(a+b)/a +1 \big) }\, ,
\end{align*}
and this completes the induction.

For the inequality in the second part of the lemma, we use Stirling's formula $\Gamma(1+x) \sim \sqrt{2\pi x} (x/e)^{x}$ as $x\to\infty$ to control $\Gamma\big( k (a+b)/a +1\big)$ and $\Gamma\big( k/a +1\big)$, and we obtain
\[
\mathrm{Vol}(E_{k} (t,B)) \leq \frac{c}{k \sqrt{1/a} \sqrt{(a+b)/a}} \bigg(  \frac{ \frac{2}{a}   \Gamma( \tfrac1a ) \Gamma( \tfrac{a+b}{a}) \times B^{1/a} t^{(a+b)/a}}{ \big(e^{-1}/a \big)^{1/a} \big( e^{-1} (a+b)/a \big)^{(a+b)/a}  k^{1/a} k^{(a+b)/a}} \bigg)^k \, .  
\]
Thanks to the asymptotics of $\Gamma(\ga)$ as $\ga\to +\infty$ and $\ga \to 0$, we find that there is a constant $c$ such that for all $a,b$
\[
\mathrm{Vol}(E_{k} (t,B)) \leq \frac{a}{  \sqrt{a+b} } \bigg(  \frac{ c B^{1/a} t^{(a+b)/a}}{ (a+b)^{1/2}  k^{1/a} k^{(a+b)/a}} \bigg)^k \, .  
\]
\end{proof}

We then use Lemma~\ref{lem:VolE} to control the probability that $\cL_m^{(\cE_{a,b}^{B})} (\Lambda_{t,x})$ is larger than~$k$:
\begin{equation}
\label{eq:firstmoment}
\bbP \Big( \cL_m^{(\cE_{a,b}^{B})} (\Lambda_{t,x}) \geq k \Big) = \bbP(\cN_k \geq 1) \leq \bbE[\cN_k] ,
\end{equation}
where $\cN_k = {\rm Card} \{ \Delta \subset \gU_m \, ; \, \Delta \in \cE_{a,b}^{B}\}$ is the number of sets of $k$ points in $\gU_m$ that are $\cE_{a,b}^{B}$-compatible.
Since all the points of $\gU_m = \{Z_1, \ldots, Z_m\}$ are exchangeable, we have
\[\bE[\cN_k] = \binom{m}{k} \bbP\Big(  \exists \ \sigma\in \mathfrak{S}_k\ s.t. \  (Z_{\sigma(1)},\ldots,  Z_{\sigma(k)}) \in E_{k}(t,B) \Big)\, .\]
Since the $(Z_i)_{1\le i\le m}$ are i.i.d.\ uniform in $\Lambda_{t,x} = [0,t]\times [-x,x]$ (of volume $2tx$), we get that
\begin{equation}
\label{eq:exchangeable}
\bbE[N_k] = \binom{m}{k} \times \frac{{\rm Vol}\big( E_k(t,B) \big)}{ (2tx)^k /k!} ,
\end{equation}
where the  $k!$ comes from the fact that we rearrange the $Z_i$'s so that $0<t_1< \cdots < t_k <t$.
Using Lemma \ref{lem:VolE} together with $\binom{m}{k} \le m^k/k_1$, we therefore obtain that
\begin{equation}
\label{conclusion}
\bbP \Big( \cL_m^{(\cE_{a,b}^{B})} (\Lambda_{t,x}) \geq k \Big) \le  \binom{m}{k} \times \frac{{\rm Vol}\big( E_k(t,B) \big)}{ (2tx)^k /k!} \le  \bigg( \frac{CB^{1/a} t^{(a+b)/a} m }{ tx k^{(a+b+1)/a}}\bigg)^k  \, .
\end{equation}
This gives the upper bound \eqref{ELPP-upper}.

\subsubsection{Lower bound}

For any $k\ge 1$, consider  for $1\le i\le 4k$ the sub-boxes of $\Lambda_{t,x}$
\[\cB_i := \Big[\frac{(i-1)t}{4k},\frac{it}{4k}\Big) \times \Big[ - \frac{B^{1/a}  (t/4)^{b/a}}{2k^{(b+1)/a}} \wedge x , \frac{B^{1/a}  (t/4)^{b/a}}{2k^{(b+1)/a}} \wedge x\Big] \, .\]
Then, notice that if there are at least $k$ boxes among $\{\cB_{2i} \}_{1\le i\le 2k}$ containing (at least) one point, then this set of $k$ points has an entropy which is bounded by 
\[k\times \frac{(B^{1/a}  (t/4)^{b/a} k^{-(b+1)/a})^a}{(t/4k)^b} \le B.\]
Hence, we get that
\begin{equation}
\label{eq:firststeplower}
\bbP\Big( \cL_m^{(\cE_{a,b}^{B})}(\Lambda_{t,x}) \ge k \Big) \le \bbP\Big( \sum_{i=1}^{2k} \ind_{ \{|  \gU_m \cap \cB_{2i}  | \ge 1 \}} \le k \Big) = \bbP\Big(\sum_{i=1}^{2k} \ind_{ \{|  \gU_m \cap \cB_{2i}  | =0 \}} \le k \Big)\,.
\end{equation}
For the last probability, we use a union bound and the fact that the  $\ind_{ \{|  \gU_m \cap \cB_{2i}  | =0 \}} $ are exchangeable, to get that
\begin{align}
\bbP\Big( \cL_m^{(\cE_{a,b}^{B})}(\Lambda_{t,x}) \le k \Big) &\le  \binom{2k}{k} \bbP\Big(  \gU_m  \cap \bigcup_{i=1}^k \cB_i  = \emptyset \Big) \notag\\
&\le2^{2k} \Big( 1- \frac{ B^{1/a} t^{b/a} }{4^{b/a} k^{(a+b)/a} x} \wedge\frac14 \Big)^m \, .
\end{align}
In the second inequality we used that  $ \gU_m$ is a set of $m$ independent random variables uniform in $ \Lambda_{t,x}$ (of volume $2tx$), and that $\bigcup_{i=1}^k \cB_i$ has  volume $ \tfrac{B^{1/a} (t/4)^{(a+b)/a} }{k^{(b+1)/a}} \wedge \tfrac{tx}{2}$. 
Therefore, using also that $1-x \le e^{-x}$ we obtain that
\begin{equation}
\label{conclusionlower}
\bbP\Big( \cL_m^{(\cE_{a,b}^{B})}(\Lambda_{t,x})\le k \Big) \le \exp\bigg\{ c k \Big( 1- c \Big( \frac{ B^{1/a} t^{b/a} }{ x k^{(a+b)/a} }\wedge 1   \Big)  \frac{m}{k} \Big) \bigg\} \, ,
\end{equation}
which concludes the proof of the \eqref{ELPP-lower}.

\subsection{H\"older-constrained LPP}
\label{sec:Hol}

We only give a brief outline of the proof.

\smallskip
\noindent
{\it Upper bound.}
We define 
\[H_{k}(t,A) = \Big\{ (t_i,x_i)_{1\le i\le k} \, ;\,  0<t_1<\cdots<t_k< t  , \mathrm{H}_{\gamma}\big( (t_i,x_i)_{1\le i\le k} \big) \le A\Big\} \, .\]
Then, as above, we are able to compute exactly the volume of $H_k (t,A)$: for any $t>0$ and $A> 0$, we have for any $k\ge 1$
\[\mathrm{Vol} \big( H_k (t,A) \big) =  (2A)^k \frac{\Gamma(1+\gamma)^k}{\Gamma\big( k(1+\gamma)+1\big)}  t^{k (1+\gamma)} .\]
We do not develop the proof, which comes from an induction formula analogous to \eqref{eq:induction}:
\[\mathrm{Vol}\big( H_k(t,A) \big) = 2 \int_{u=0}^t \int_{y=0}^{A u^{\gamma}} \mathrm{Vol }\big( H_{k-1}(t-u , A) \big) dy du .\]
As a consequence of Stirling's formula, there exist constants $c,C$ such that 
\begin{equation}
\label{formulaeVol}
\mathrm{Vol}(H_{k} (t,A)) \leq c\bigg(  \frac{2A \Gamma( 1+\gamma)  t^{1+\gamma}}{  \big( (1+\gamma)/e \big)^{1+\gamma} k^{1+\gamma} } \bigg)^k 
\le  \bigg(  \frac{C A  t^{1+\gamma}}{ (1+\gamma)^{1/2} k^{1+\gamma} } \bigg)^k \, .
\end{equation}
To obtain \eqref{HLPP-upper}, one then follows exactly the same scheme as in \eqref{eq:firstmoment}--\eqref{conclusion} above, using \eqref{formulaeVol} in place of Lemma~\ref{lem:VolE}.

\smallskip
\noindent
{\it Lower bound.} 
For any $k\ge 1$, we consider for $1\le i\le 4k$ the sub-boxes of $\Lambda_{t,x}$: 
\[\cB_i := \Big[\frac{(i-1)t}{4k},\frac{it}{4k}\Big) \times \Big[ - \frac{A(t/k)^{\gamma}}{2} \wedge x , \frac{A(t/k)^{\gamma}}{2} \wedge x\Big] \, .\]
Note that, if there are at least $k$ boxes among $\{\cB_{2i} \}_{1\le i\le 2k}$ containing (at least) one point, then this set of $k$ points has a $\gamma$-H\"older norm which is bounded by $  A (k/t)^{\gamma} / (t/k)^\gamma  \le A$.
Then, we obtain~\eqref{HLPP-lower} exactly as above, see~\eqref{eq:firststeplower}--\eqref{conclusionlower} above.

\subsection{Non-directed E-LPP} 

We proceed analogously to the two previous sections, some details differing for the lower bound.

\smallskip
\noindent
{\it Upper bound.}
We define
%the sets of $k$ elements (with order) of $\bbR^2$ that have an entropy up to time horizon $t$ smaller than $B$,
\begin{align*}
{\rm E}_k (t,B) & = \Big\{  \Delta = (x_i)_{1\le i\le k} \, ; \,  \mathbf{Ent}_{a,b}(t,\Delta) \le B \Big\} \\
& = \Big\{  \Delta = (x_i)_{1\le i\le k} \, ; \, \sum_{i=1}^k \| x_{i}- x_{i-1} \|^{a/(b+1)}    \le D \Big\} =: \tilde {\rm E}_k\big(  D\big) \, ,
\end{align*}
with $D=(Bt^b)^{1/(b+1)}$ ---we used \eqref{def:Entnondir} to get the second equality. As above, we are able to obtain its volume:
setting $ \gamma= (b+1)/a$ for simplicity, we have for any $D>0$ and any $k\geq 1$
\[\mathrm{Vol} \big(  \tilde {\rm E}_k(D) \big) =  (2\pi \gamma)^{k} \frac{\Gamma(2\gamma)^k}{\Gamma(2k\gamma +1)} D^{2k\gamma} .\]
This is easily proven by iteration,  using the recursion formula, for $k\ge 2$
\begin{align*}
\mathrm{Vol} \big(  \tilde {\rm E}_{k}(D) \big) &= \int_{0}^{D^{\gamma}} 2 \pi r \mathrm{Vol} \big(  \tilde {\rm E}_{k-1}(D - r^{1/\gamma}) \big) dr \, .
\end{align*}
We leave the details to the reader.
Then, an easy application of Stirling's formula gives that there exists some constant $C$ such that 
\begin{equation}
\label{formulaVolnondir}
\mathrm{Vol}({\rm E}_k (t,B) )  \leq \Big( \frac{ C (B t^b)^{2/a} }{k^{2(b+1)/a}}  \Big)^k\, .
\end{equation}
The upper bound  \eqref{nondirE-upper} then comes from the exact same scheme as for \eqref{eq:firstmoment}--\eqref{conclusion} above, using  \eqref{formulaVolnondir} in place of Lemma~\ref{lem:VolE}.

\smallskip
\noindent
{\it Lower bound.} 
The idea of the proof is similar to the one in the directed context, with more technicalities due to the non-directedness. 
We consider a partition of the plane into small squares 
of side $\delta:= \pi r/\sqrt{m}$: for any $x\in (\delta \bbZ)^2$ we let $\cB_x $ be the square
of side $\delta$ centered at $x$. 
It is easy to see that there are at least $m/4$ disjoint squares $\cB_x $ (provided that $m$ is large enough) that can be 
placed into a rectangle (inscribed in $\Lambda_r$) ordered as follows: we let $x_0=0$ and then we enumerate  $x_1,\ldots, x_{m/4}$ following a spiral in a clockwise way, in order to have that any two consecutive $\cB_{x_i},\cB_{x_{i+1}}$ are adjacent (see Figure \ref{fig1}). 

Note that a square $\cB_x$ has volume $\pi^2 r^2/m$ (and recalling $\Lambda_r$ has volume $\pi r^2$), $\cB_x$ contains at least one point of $\gU_m$ with probability $1-(1-\pi/ m)^m \ge 1-e^{-\pi}$.
We define $Q_{m/4}$ the number of non-empty squares among $\cB_{x_0}, \ldots, \cB_{x_{m/4}}$, and we define iteratively the indices~$I_j$ of the non-empty squares, by $I_0=0$ and for $1\le j\le Q_{m/4}$
\[I_{j} = \inf \big\{ i >I_{j-1} \ ;\ \cB_{x_i} \cap \gU_m \neq \emptyset  \}\, .\]

\begin{figure}[htbp]
  \captionsetup{width=0.9\linewidth}
\includegraphics[scale=0.6]{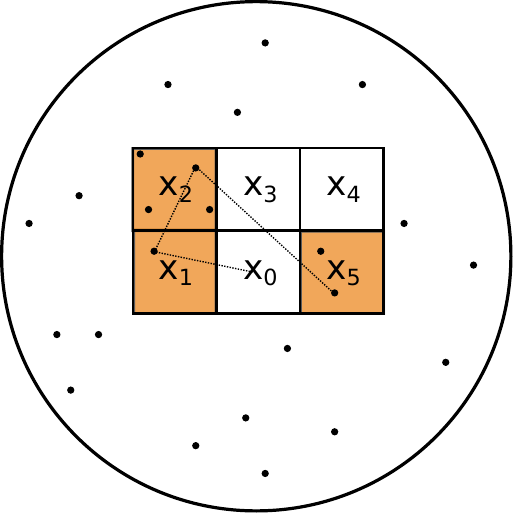}
\caption{
\footnotesize In the picture we put $m=24$ points uniformly on $\Lambda_r$ and we consider a rectangle built by $6 = m/4 $ squares $\cB_{x_0}, \cdots,\cB_{x_5}$ 
%of side $\delta =\pi r/\sqrt{m}$
enumerated following a spiral in a clockwise way starting from the origin. Then we consider the non-empty rectangles (in orange)
and their indices. In this example we have $I_1=1, I_2=2, I_3=5$. 
Finally we draw a path starting from the origin and collecting one point in exactly all $\cB_{I_1}, \cdots,\cB_{I_3}$.}
\label{fig1}
\end{figure}

For $k\ge1$, and if $Q_{m/4}\ge k$, we may consider a path $\Delta$ collecting one point in exactly all $\cB_{x_{I_1}}, \ldots, \cB_{x_{I_k}}$: the entropy of such $\Delta$ is bounded by (see Figure \ref{fig1})
\[\frac{1}{t^b}  \Big( \sum_{j=1}^k \big(  4(I_{j}-I_{j-1})\delta  \big)^{a/(b+1)} \Big)^{b+1} \le \frac{4^a r^a}{t^b m^{a/2}} \Big(\sum_{j=1}^k U_j \Big)^{b+1},\]
where we  set $U_j:= (I_{j}-I_{j-1})^{a/(b+1)}$.
Therefore,  for $\sL_m^{(\sE_{a,b}^{t,B})} ( \Lambda_r) $ to be smaller or equal than $k$, one needs to have either $Q_{m/4} <k$ or that the entropy of $\Delta$ chosen above is larger than $B$: this leads to
\begin{equation}
\label{lowerbound1}
\bbP \Big( \sL_m^{(\sE_{a,b}^{t,B})} ( \Lambda_r) \leq k \Big) \le \bbP\big( Q_{m/4} <k \big) + \bbP\Big(  Q_{m/4} \ge k\, , \, \sum_{j=1}^k U_j  > \Big( \frac{Bt^b m^{a/2}}{4^a r^a}  \Big)^{1/(b+1)}  \Big) \, .
\end{equation}

For the first term, and for $k\le \gep^2 m/4$ (with $\gep>0$ small, fixed in a moment), we realize that $Q_{m/4} <k$ implies that there are at least $(1-\gep^2) m/4$ empty squares, which gives by a union bound that
\[\bbP\big( Q_{m/4} < k\big)  \le  \binom{m/4}{(1-\gep^2) m/4}  \bbP\Big( \gU_m \cap \bigcup_{i=1}^{(1-\gep^2) m/4} \cB_{x_i} = \emptyset  \Big) \le  e^{ c_{\gep} m} \Big(1-  \frac{(1-\gep^2) \pi}{4}\Big)^m \, . \]
For the second inequality, we used that the volume of $\bigcup_{i=1}^{(1-\gep^2) m/4} \cB_{x_i}$ is $(1-\gep^2)\pi^2 r^2 /4$. We note that the constant $c_{\gep}$ goes to $0$ as $\gep$ goes to $0$: we can therefore fix $\gep>0$ sufficiently small so that
\begin{equation}
\bbP\big( Q_{m/4} < k\big)  \le  e^{- \pi m/8}\quad \text{for all }\  k\le \gep^2 m/4.
\end{equation}

For the second term in \eqref{lowerbound1}, let us write $V:= k^{-1}\big( Bt^b m^{a/2}/r^a  \big)^{1/(b+1)}$ ---we will consider only the case when $V$ is large---, so that we need to bound
\begin{equation}
\label{secondpart}
\bbP\Big( Q_{m/4} \le k \,  ,\, \sum_{j=1}^k U_j  >  k V  \Big) \le \bbP\big( N_k >  \gep m \big) + \bbP\Big( Q_{m/4} \le k \,  , N_k\le  \gep m  \, ,\, \sum_{j=1}^k U_j  >  k V  \Big) \, ,
\end{equation}
where $N_k$ denotes the total number of points in the  non-empty squares $\cB_{x_{I_1}},\ldots, \cB_{x_{I_k}}$.
We easily have that 
\[\bbP\big( N_k >  \gep m  \big) \le \frac{1}{e^{-c k}} \binom{m}{  \gep m } \Big( \frac{\pi k}{m}\Big)^{  \gep m } \le e^{c k} \frac{(\pi k)^{\gep m}}{ (\gep m) !} \, , \]
where the denominator in the first inequality comes from the fact that we work conditionally on the fact that $k$ squares are non-empty (which has probability bounded  below by $e^{- ck}$).
Hence, since we work with  $k\le \gep^2 m/4$, and provided that $\gep$ has been fixed small enough, we get that there is a constant $c>0$ such that $\bbP ( N_k >  \gep m  ) \le e^{-c m}$.

For the last part, note that since the squares $\cB_x$ are exchangeable, we can control for $1\le i_1<\cdots<i_k \le m/4$
% with $i_k \le m/4$
\begin{align*}
\bbP\big( I_1=i_1 ,\ldots, I_k =i_k & ; N_k \le \gep m\big) \\
&=  \sumtwo{n_1,\ldots , n_k }{1\le n_1+\cdots+n_k \le \gep m} 
 \binom{m}{n_1,\ldots,n_k} \Big(\frac{\pi}{m}\Big)^{n_1 +\cdots + n_k}
\Big( 1-\frac{\pi i_k }{m}  \Big)^{m - (n_1 +\cdots + n_k)}  \\
&\le \Big( 1-\frac{\pi i_k }{m}  \Big)^{(1-\gep) m} \sumtwo{n_1,\ldots , n_k}{1\le n_1+\cdots+n_k \le \gep m}  \frac{\pi^{n_1}}{n_1!} \cdots   \frac{\pi^{n_k}}{n_k!} \le e^{-(1-\gep) \pi i_k} e^{\pi k} \, .
\end{align*}
Where we used that in order to have $ I_1=i_1,\ldots, I_k =i_k $ there must be exactly  $k$ non-empty squares among the first $i_k$ (with $n_1,\ldots,n_k$ points in them) and $i_k-k$ empty ones. The remaining $m-(n_1+\dots+n_k)$ points must be outside the first $i_k$ squares. 
 For the second inequality, we used that $n_1+\cdots + n_k \le \gep m$, and that the multinomial coefficient is bounded by $m^{n_1+\cdots +n_k} /(n_1! \cdots n_k !)$.
Hence, there is a constant $c$ such that
\[
\bbP\big( I_1=i_1 ,\ldots, I_k =i_k  ; N_k \le \gep m\big) \le e^{ c k} \times \bbP \big( G_j = i_j -i_{j-1} \ \text{ for all } 1\le j\le k\big)\, ,
\]
where $(G_j)_{j\ge 1}$ are i.i.d.\ geometric random variables, of parameter $1-e^{-(1-\gep)\pi}$.
We therefore obtain that, provided that $V$ is large enough
\begin{align*}
\bbP\Big( Q_{m/4} \le k \,  , N_k\le  \gep m  \, ,\, \sum_{j=1}^k U_j  >  k V  \Big)
& \le e^{ck} \bbP\Big( \sum_{j=1}^k (G_j)^{a/(b+1)} >k V \Big)  \le e^{- c' k V}\, .
\end{align*}

\smallskip
To conclude, we have obtained that there are constants such that for $k\le \gep^2 m/4$, and for $V:=k^{-1}\big( Bt^b m^{a/2}/r^a  \big)^{1/(b+1)}$ large enough,
\begin{equation}
\bbP \Big( \sL_m^{(\sE_{a,b}^{t,B})} (\Lambda_r) \leq k \Big) \le e^{-c m } + e^{- c' kV }\, .
\end{equation}
One obtains \eqref{nondirE-lower} by observing that when $V$ is small $e^{-c k(V-1)}$ is larger than $1$. The statements holds for all $k\le m$ by adjusting the constants.

\begin{appendix}

\section{Further simulations and conjectures}
\label{app}

In this appendix, we present some simulations, that help us make some predictions on the values of the constants in \eqref{subadditive}, and support the belief that the model is in the KPZ universality class. We treat only the directed case because in the non-directed case simulations are much more greedy and do not bring any convincing insight---our algorithm could probably be improved, but our goal is simply to hint for some conjectures, and our simulations fill that role perfectly.
%We start by commenting simulations in the H$_{\gamma}$-LPP case, where simulations are exact (and efficient), before we turn to the E-LPP case.

\subsection{Directed H$_{\gamma}$-LPP}
\label{app1}

{%\blue
For the H$_{\gamma}$-LPP, we performed simulations in the Poissonian context of Section~\ref{sec:poisson}: we work with intensity $\lambda=1$ and constraint $A=1$, so we write $\cL(t)$ for $\cL_{\lambda}^{(\cH_\gamma^{A})}(t)$ to simplify notations.

\begin{figure}[htbp]
  \captionsetup{width=0.9\linewidth}
\includegraphics[width=0.35\linewidth]{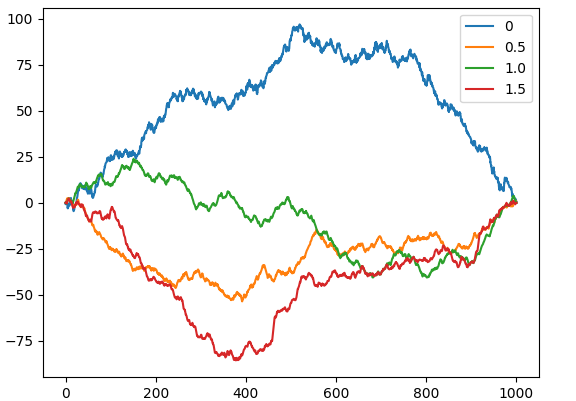}
\vspace{-0.3cm}
\caption{
\footnotesize Optimal paths for the H$_{\gamma}$-LPP with $t=1000$, for different values of $\gamma$. The same set of points is used in all four simulations. For $\gamma=0$ we have here $\cL(t)=2707$, for $\gamma=0.5$ $\cL(t)=1715$, for $\gamma=1$ $\cL(t)=1408$, and for $\gamma=1.5$ $\cL(t)=1238$. 
\label{fig:t1000}}
\end{figure}

\subsubsection{{\rm (1)} Value of the constant}
Let us present here the results of our simulations to test the value of the constant $\mathtt{C}=\mathtt{C}_{1,1}=\lim_{t\to\infty} t^{-1} \cL(t) $ in \eqref{convergence}.  We ran simulations for $t=1000$, in the box $[0,t]\times [-t^{2/3},t^{2/3}]$ (with $\lambda=1$ and $A=1$).

\begin{figure}[htbp]
  \captionsetup{width=0.9\linewidth}
\includegraphics[width=0.35\linewidth]{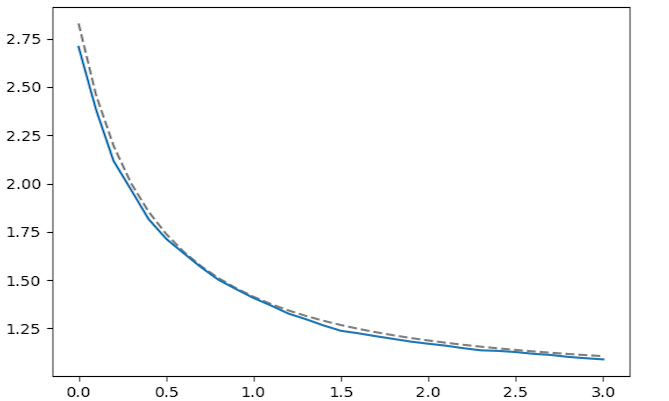}
\caption{\footnotesize Approximated values of the H$_{\gamma}$-LPP constant: the function represents the value of $t^{-1} \cL(t)$ with $t=1000$, for different values of $\gamma \in [0,3]$. 
The dotted grey line 
represents the function $\gamma \mapsto  \frac{2^{3/2}}{1+\gamma}\Gamma(1+\gamma)^{1/(1+\gamma)}$, which seems  to fit quite well the values of $t^{-1} \cL(t)$.
\label{fig:cst}}
\end{figure}

%
%\begin{table}[htbp]
%\captionsetup{width=0.9\linewidth}
%\begin{center}
%\begin{tabular}{|c|c|c|c|c|c|c|}
%\hline
%\small Value of $\gamma$               				&  \small $1.0$   	  &\small $0.9$	    	 &\small $0.8$ 			 &\small $0.7$ 			&\small $0.6$ &\small $0.5$ \\
%\hline
%\small $\cL_m$                						    & \footnotesize $309$ to $323$ & \footnotesize $417$ to $435$ & \footnotesize $600$ to $616$ &  \footnotesize $893$ to $912$ & \footnotesize $1406$ to $1429$   & \footnotesize $2318$  to $2368$ \\
%%\hline
%%$m^{\kappa} \pm m^{\kappa/3}$ 	&   $316.2 \pm 6.8$ 	&  $422.2 \pm 7.5$  & $599.5 \pm 8.4$ & $873.3 \pm  9.6$ & $1333.5 \pm 11.0$ & $2154.4 \pm 12.9$\\
%\hline
%\small $\cL_m /m^{\kappa}$   & \footnotesize$0.98$ to $1.02$ & \footnotesize$0.97$ to $1.02$ &\footnotesize $1.00$ to $1.03$  & \footnotesize$1.02$ to $1.04$  & \footnotesize$1.05$ to $1.07$ & \footnotesize$1.07$ to $1.10$ \\
%\hline
%\small $\frac{\cL_m - m^{\kappa}}{m^{\kappa/3}}$ & \footnotesize$\asymp  -0.56$ & \footnotesize $\asymp -0.77$ &\footnotesize $\asymp 1.01$ &\footnotesize $\asymp 3.19$ & \footnotesize $\asymp 7.35$ & \footnotesize $\asymp 14.07$ \\
%\hline 
%\end{tabular}
%\end{center}
%\medskip
%\caption{\footnotesize Simulations for the H$_{\gamma}$-LPP with $m=10^5$. For each value of $\gamma$ we performed $5$ simulations, and we give the minimal and maximal values in the table. For the last line, we give the mean value of $m^{-\kappa/3} (\cL_m -m^{\kappa})$ over the $5$ simulations.}
%\label{table}
%\end{table}
%\vspace{-0.5cm}

Our simulations are in accordance with the fact that $\frac1t \cL(t)$ converges a.s.\ to some constant, whose dependence on $\gamma$ is presented in Figure~\ref{fig:cst} (we present the result of only one simulation, but several simulations give values for $\frac1t \cL(t)$ very close to those presented here).  In view of the dependence on $\gamma$ of the constant $c_1$ in Theorem~\ref{thm:HLPP} (see in particular \eqref{formulaeVol}), a wild guess is that  the constant is proportional to $(1+\gamma)^{-1}\Gamma(1+\gamma)^{1/(1+\gamma)}$: the dotted grey line in Figure~\ref{fig:cst} represents the function $\gamma \mapsto  \frac{2^{3/2}}{1+\gamma}\Gamma(1+\gamma)^{1/(1+\gamma)}$ ---the factor $2^{3/2}$ is chosen so that it fits the value $\sqrt{2}$ when $\gamma=1$, corresponding to the standard Lipschitz LPP (the missing factor $\sqrt{2}$ comes from the length of the diagonal in Hammersley's LPP process). The two curves match quite closely, but they seem to disagree when $\gamma=0$ (the constant $\mathtt{C}_{1,1}$ seems very close to $2.75$, whereas $2^{3/2}\approx 2.83$).

%We interpret this by the fact that paths have the constraint to be in $[0,1]\times [-1,1]$ (on the contrary to the Poissonian version of the model), and that this constraint becomes stronger as $\gamma$ decreases and $\kappa$ increases, due to the increased typical transversal fluctuations

\subsubsection{{\rm (2)} Convergence of the recentered and renormalized LPP}

In order to test the convergence in distribution of $ t^{-1/3}(\cL(t) - \mathtt{C}_{1,1} t)$, we performed $1000$ simulations for the point-to-point H$_{\gamma}$-LPP with $t=500$, for the three values $\gamma=0$, $\gamma=0.5$ and $\gamma=1.5$. 

\begin{figure}[htbp]
  \captionsetup{width=0.9\linewidth}
\captionsetup[subfigure]{width=0.8\linewidth}
\begin{center}
\begin{tabular}{ccc}
  \includegraphics[width=0.22\linewidth]{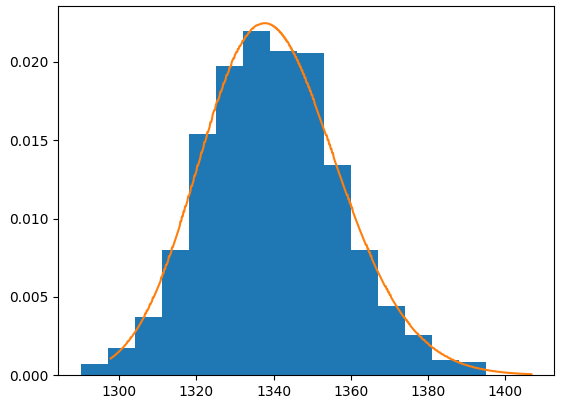}   
&\quad
\includegraphics[width=0.22\linewidth]{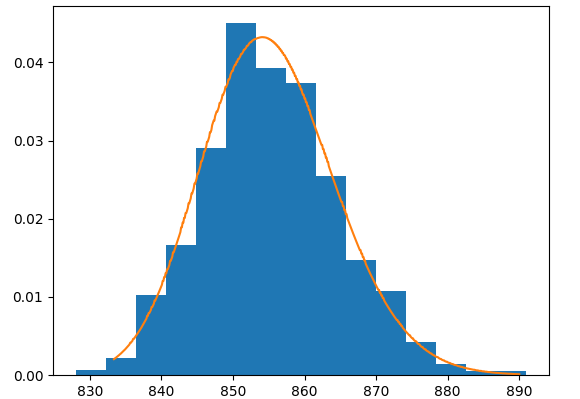}  
\quad 
&
 \includegraphics[width=0.22\linewidth]{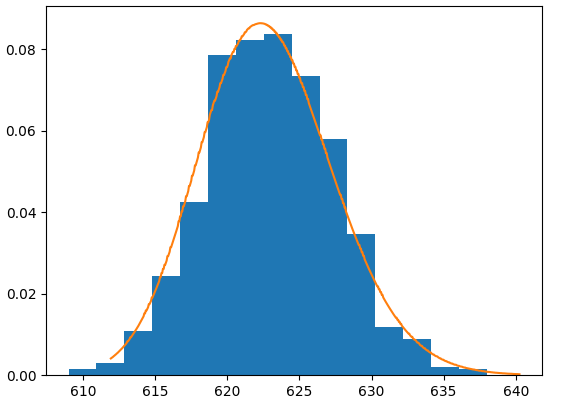} \\
\footnotesize (a) $\gamma=0$. & \footnotesize (b) $\gamma=0.5$. & \footnotesize (c) $\gamma=2$.
\end{tabular}
\vspace{-0.3cm}
    \caption{ \footnotesize Histograms of $k=10^3$ simulations of the point-to-point H$_{\gamma}$-LPP with $t=500$. The three subfigures (a), (b) and (c) correspond to the cases $\gamma=0$, $\gamma=1/2$ and $\gamma=3/2$ respectively. In each case, we superimpose the graph of the Tracy-Widom GUE density, after a recentering by $\mathtt{C}_{\gamma} t$ (with $\mathtt{C}_{\gamma} \approx 2.75 , 1.75, 1.26$ from left to right), and a renormalization by $c_{\gamma} t^{1/3}$ (with $c_{\gamma} \approx 2.5,1.3, 0.65$ from left to right).}
    \label{fig:histoHLPP}
  \end{center}
\end{figure}

% Let us tell a bit more about the recentering and renormalization we used to superimpose the GUE density $f(x)$ on top of the histograms: the graph of the function is that of $x\mapsto \mathtt{C}_{\gamma} t + c_{\gamma} t^{-1/3} f( c_{\gamma} t^{1/3} \, x) $, where $\mathtt{C}_{\gamma}$ is the recentering constant and $c_{\gamma}$ a renormalization constant.
% (for the histograms presented in Figure~\ref{fig:histoHLPP}, we used $c=0.4$ for $\gamma=1$, $c=1.5$ for $\gamma=3/4$, $c=6.4$ for $\gamma=1/2$).
%We stress that the same renormalization is used in all three cases of Figure~\ref{fig:histoHLPP}, that is we scale the GUE density $f(x)$ by $t^{1/3}$, and in particular there is no additional $\gamma$-dependent renormalization for the GUE random variable.

The histograms presented in Figure~\ref{fig:histoHLPP} seem to confirm the convergence to a Tracy-Widom distribution, leading to a (far-reaching) conjecture, for the (point-to-point) H$_{\gamma}$-LPP.
\begin{conj}
\label{conj1}
For every $\gamma\ge 0$, there exists a constant $\mathtt{C}_{\gamma}$ (equal to $\frac{2^{3/2}}{1+\gamma}\Gamma(1+\gamma)^{1/(1+\gamma)}$?) and a constant $c_{\gamma}$ such that, for the point-to-point H$_{\gamma}$-LPP in Poisson environment with intensity $\lambda=1$ and $\gamma$-H\"older constraint $A=1$, we have
\begin{equation}
\frac{\cL(t) - \mathtt{C}_{\gamma}\, t }{ c_{\gamma}\, t^{1/3} } \xrightarrow[]{(\dd)}  F_{GUE} \quad \text{as } t\to+\infty \, .
\end{equation}
\end{conj}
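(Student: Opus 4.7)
The plan splits the conjecture into two parts of very different difficulty: the identification of the law of large numbers constant $\mathtt{C}_\gamma$, and the Tracy--Widom GUE fluctuations on scale $t^{1/3}$.

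For the value of $\mathtt{C}_\gamma$, existence of the a.s.\ limit $\cL(t)/t \to \mathtt{C}_\gamma$ is already given by Proposition~\ref{prop:subadditive}, so only the precise value needs to be determined. I would proceed by a sharp first-moment calculation matched by a construction. The upper bound $\mathtt{C}_\gamma \le \tfrac{2^{3/2}}{1+\gamma}\Gamma(1+\gamma)^{1/(1+\gamma)}$ should come from the explicit volume formula derived in Section~\ref{sec:Hol}, namely $\mathrm{Vol}(H_k(t,A)) = (2A)^k \Gamma(1+\gamma)^k t^{k(1+\gamma)}/\Gamma(k(1+\gamma)+1)$: Stirling's formula places the threshold $\bbE[\cN_k] \approx 1$ at $k \sim c(\gamma)\, t$ where $c(\gamma)$ has precisely the conjectured form up to a numerical factor accounting for the unit-Lipschitz normalisation. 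The matching lower bound is the subtle part. A second-moment / concentration argument on $\cN_k$ for $k$ slightly below the threshold is one route, requiring control of the correlations between chains sharing a common prefix; an alternative and probably cleaner route is to combine super-additivity with an explicit construction of near-optimal chains built block by block, each block carrying the locally optimal density prescribed by a variational principle.

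For the Tracy--Widom GUE part, the case $\gamma=1$ is essentially immediate: up to the deterministic rescaling \eqref{eq:scalingL} and a $45^{\circ}$ rotation, the Lipschitz-$1$ H-LPP in Poissonian environment coincides with Hammersley's classical point-to-point LPP, so the Baik--Deift--Johansson theorem \cite{BDJ99} applies and yields Conjecture~\ref{conj1} together with an explicit expression for $c_\gamma$. For $\gamma\neq 1$, however, the conjecture is open and genuinely hard: no Robinson--Schensted-type bijection seems to apply, no determinantal formula is available for the point-to-point distribution, and no stationary solution with explicit marginals is known for the associated growth process. The plan in this regime would be a universality argument, either by coupling H$_\gamma$-LPP to a perturbed family of solvable models (in the spirit of recent non-integrable KPZ results for interacting particle systems) or by developing Busemann-function and energy-comparison techniques directly in the continuous H\"older setting. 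The main obstacle is precisely that non-integrable KPZ universality is currently the central open problem of the field; a realistic intermediate target is to first establish the variance exponent $\mathrm{Var}(\cL(t)) \asymp t^{2/3}$ and the transversal fluctuation exponent $2/3$, which should be accessible by moment and coupling methods before any Tracy--Widom convergence can be addressed.
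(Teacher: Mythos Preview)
The statement you are addressing is a \emph{conjecture}, not a theorem, and the paper does not prove it. The paper's support for Conjecture~\ref{conj1} consists entirely of numerical simulations presented in Appendix~\ref{app}: histograms of $\cL(t)$ for $t=500$ and various $\gamma$, overlaid with rescaled Tracy--Widom densities, together with a plot of $t^{-1}\cL(t)$ against $\gamma$. There is no analytic argument in the paper beyond the existence of $\mathtt{C}_\gamma$ (Proposition~\ref{prop:subadditive}) and the two-sided bounds of Theorem~\ref{thm:HLPP}. So there is no ``paper's proof'' to compare your proposal against.

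Your proposal is therefore not a proof but a research plan, and you are transparent about this. A few comments on its accuracy relative to what the paper actually establishes. Your observation that $\gamma=1$ reduces to Hammersley's LPP via the $45^\circ$ rotation is correct and is exactly how the paper motivates the value $\sqrt{2}$ for $\mathtt{C}_1$; so for $\gamma=1$ the conjecture is indeed a consequence of \cite{BDJ99}. For $\gamma\neq 1$ you correctly identify the problem as open and as an instance of non-integrable KPZ universality. Regarding the value of $\mathtt{C}_\gamma$: your proposed first-moment upper bound via the volume formula is exactly the heuristic the paper uses to arrive at the guess $\tfrac{2^{3/2}}{1+\gamma}\Gamma(1+\gamma)^{1/(1+\gamma)}$, but note that the paper itself flags (below Figure~\ref{fig:cst}) that this formula appears to \emph{disagree} with simulations at $\gamma=0$ (simulated $\approx 2.75$ versus $2^{3/2}\approx 2.83$), so the conjectured closed form is itself tentative and your matching lower bound program may well fail to produce it.

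In short: there is no gap in your reasoning because you do not claim a proof; but there is also nothing to compare, since the paper offers only numerical evidence for what remains an open problem.
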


\subsection{Directed E-LPP}
\label{app2}

As far as the directed E-LPP is concerned, we also performed simulations in the setting of Section~\ref{sec:poisson} with $t=100$, with a Poisson intensity $\lambda=1$ and a constraint $B=1$. Simulations are much less efficient, and the simulated annealing procedure only gives an approximate (under-estimated) value for $\cL(t)=\cL_\lambda^{(\cE_{a,b}^B)}(t,0) $.

\begin{figure}[htbp]
  \captionsetup{width=0.9\linewidth}
\begin{center}
\minipage{0.24\textwidth}
\includegraphics[width=\linewidth]{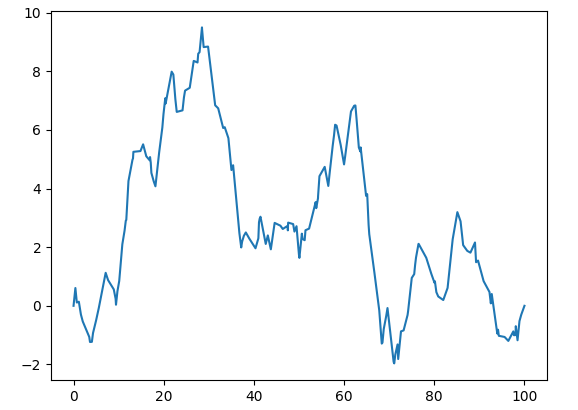}
\endminipage
\minipage{0.24\textwidth}
  \includegraphics[width=\linewidth]{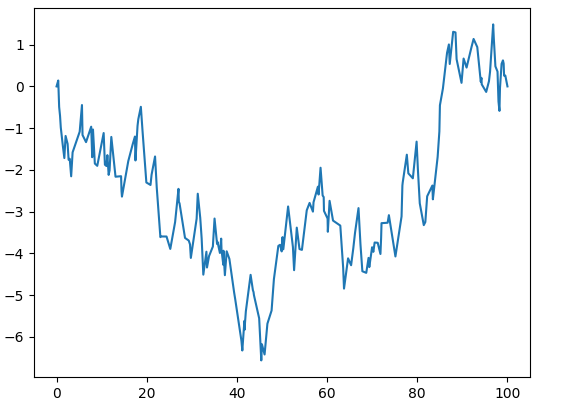}
\endminipage
\minipage{0.24\textwidth}
  \includegraphics[width=\linewidth]{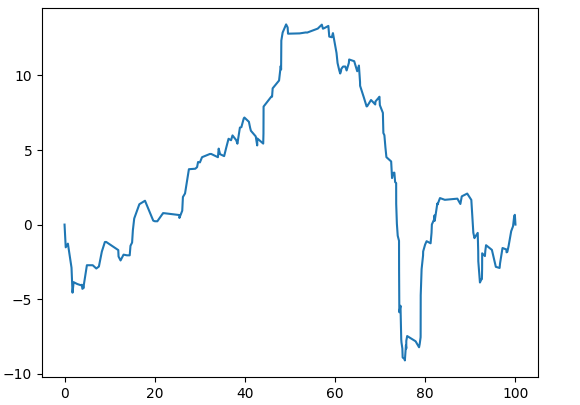}
\endminipage
\minipage{0.24\textwidth}
  \includegraphics[width=\linewidth]{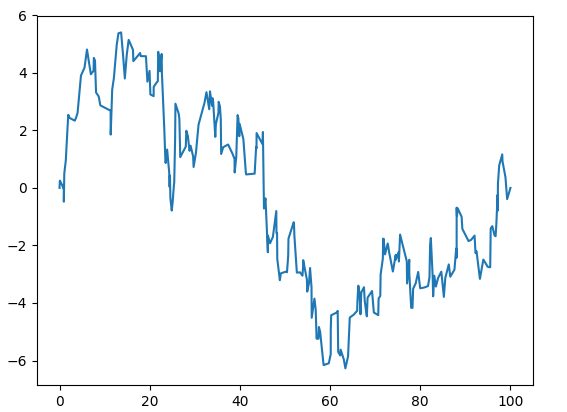}
\endminipage
\end{center}
\vspace{-0.3cm}
  \caption{\footnotesize Simulation of Poisson point-to-point E-LPP with $t=100$, via a simulated annealing procedure. The plots represents a path which collects a number of points that  approximate $\cL(t)$, with different parameters $a,b$. From left to right we have: $a=2,b=1$ ($\mathtt{C}_{1,1}\approx 1.83$), $a=4,b=1$ ($\mathtt{C}_{1,1}\approx 1.96$), $a=1,b=0$ ($\mathtt{C}_{1,1}\approx 2.08$), $a=2,b=0$ ($\mathtt{C}_{1,1}\approx 2.55$).  }
  \label{fig:cstELPP}
\end{figure}

Figure \ref{fig:cstELPP} presents some simulations to test the dependence of the constant $\mathtt{C}_{1,1} =\lim_{t\to\infty} \frac1t \cL(t)$ on the parameters $a,b$. The only conjecture we may risk to formulate (thanks to simulations for others values of $a,b$ that we do not present here) is that the constant should be non-decreasing in $a$ and non-increasing in $b$. Further conclusions are hard to draw from our simulations.

%For this reason, we were only able to build an histogram for the case $a=2,b=1$ (with $400$ realizations), for $m=10^5$, that we present in Figure~\ref{fig:histoELPP}. We also performed $100$ simulations, for different values of the parameters of $a,b$ corresponding to $\kappa=1/2$, $\kappa=4/7$ and $\kappa=2/3$ (the same values of $\kappa$ as in Figure~\ref{fig:histoHLPP}\textsc{(a),(b),(c)}), in order to test the value of the constant $\mathtt{C} =\lim_{m\to\infty} \cL_m/m^{\kappa}$ or $\mathtt{C}_{1,1}^{\mathtt E}(0) =\lim_{\lambda \to\infty} \cL_{\lambda}/\lambda^{\kappa}$. 
\begin{figure}[htbp]
  \captionsetup{width=0.9\linewidth}
  \begin{center}
\includegraphics[width=0.28\linewidth]{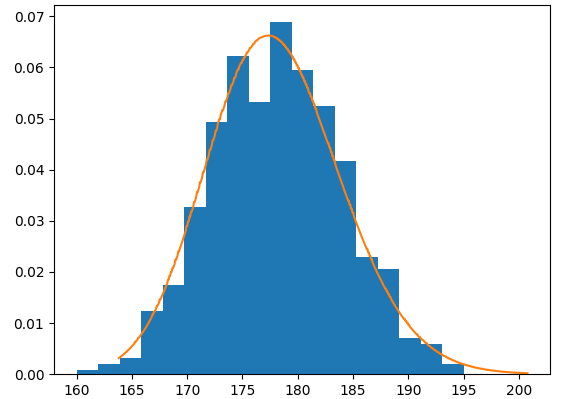}
\end{center}
\vspace{-0.3cm}
\caption{\footnotesize Histogram of 1000 realizations of $\cL(t)$ for $t=100$ (with $\lambda=1$, $B=1$), with $a=2,b=1$. We also plotted the graph of the GUE density, centered by $\mathtt{C_{a,b}} t$ with  $\mathtt{C}_{a,b}\approx 1.89$, and rescaled by $c_{a,b}\, t^{1/3}$ with $c_{a,b}\approx1.4$.
}
\label{fig:histoELPP}
\end{figure}

The histogram presented in Figure~\ref{fig:histoELPP} makes it natural  to conjecture that the Poisson point-to-point E-LPP, when properly recentered and renormalized, converges in distribution to a Tracy-Widom GUE distribution.

}

\end{appendix}

\bibliographystyle{plain}
\bibliography{biblioHTBN.bib}

\end{document}